\newtheorem{thm}{Theorem}[section]
\newtheorem{cor}[thm]{Corollary}
\newtheorem{prop}[thm]{Proposition}
\def\eq#1{{\rm(\ref{#1})}}
\def\Eq#1#2{\ifthenelse{\equal{#1}{*}}
  {\[\begin{aligned}[]#2\end{aligned}\]}
  {\begin{equation}\begin{aligned}[]\label{#1}#2\end{aligned}\end{equation}}}
\def\A{\mathscr{A}}
\def\D{\mathscr{D}}
\def\E{\mathscr{E}}
\def\G{\mathscr{G}}
\def\P{\mathscr{P}}
\newcommand\R{\mathbb{R}}
\newcommand\N{\mathbb{N}}
\newcommand\Hc{\mathscr{H}}
\newcommand{\QA}[1]{\A_{#1}}
\title[On the best Hardy constant for means]
{On the best Hardy constant for quasi-arithmetic means and homogeneous deviation means}
\author{Zsolt P\'ales}
\address{Institute of Mathematics, University of Debrecen, Pf.\ 12, 4010 Debrecen, Hungary}
\email{pales@science.unideb.hu}
\author{Pawe\l{} Pasteczka}
\address{Institute of Mathematics, Pedagogical University of Cracow,  Podchor\k{a}\.{z}ych str 2, 30-084 Cracow, Poland}
\email{pawel.pasteczka@up.krakow.pl}
\thanks{The research of the first author was supported by the Hungarian Scientific Research Fund (OTKA) Grant
K-111651 and by the EFOP-3.6.1-16-2016-00022 project. This project is co-financed by the European Union
and the European Social Fund.}
\keywords{mean; Hardy mean; Hardy constant; Hardy inequality; quasi-arithmetic mean; deviation mean; Gini mean}
\subjclass{26D15}
\begin{document}

\begin{abstract} 
The aim of this paper is to characterize the so-called Hardy means, i.e., those means 
$M\colon\bigcup_{n=1}^\infty \R_+^n\to\R_+$ that satisfy the inequality
\Eq{*}{
  \sum_{n=1}^\infty M(x_1,\dots,x_n) \le C\sum_{n=1}^\infty x_n
}
for all positive sequences $(x_n)$ with some finite positive constant $C$. The smallest constant $C$ 
satisfying this property is called the Hardy constant of the mean $M$.

In this paper we determine the Hardy constant in the cases when the mean $M$ is either a concave 
quasi-arithmetic or a concave and homogeneous deviation mean.
\end{abstract}

\maketitle

\section{Introduction}

Hardy's, Landau's, Carleman's and Knopp's celebrated inequalities in an equivalent and unified form state that 
\Eq{H}{
  \sum_{n=1}^\infty \P_p(x_1,\dots,x_n) \le C(p) \sum_{n=1}^\infty x_n,
}
for every sequences $(x_n)_{n=1}^\infty$ with positive terms, where $\P_p$ denotes the $p$-th \emph{power mean} 
(extended to the limiting cases $p=\pm\infty$) and
\Eq{*}{
C(p):=
\begin{cases} 
1 & p=-\infty, \\
(1-p)^{-1/p}&p \in (-\infty,0) \cup (0,1), \\ 
e & p=0, \\
+\infty & p\in[1,\infty],
\end{cases} 
}
and this constant is sharp, i.e., it cannot be diminished. First result of this type with nonoptimal constant was 
established by Hardy in the seminal paper \cite{Har20a}. Later it was improved and extended by Landau \cite{Lan21}, 
Knopp \cite{Kno28}, and Carleman \cite{Car32} whose results are summarized in inequality \eq{H}. More about the history 
of the developments related to Hardy type inequalities is sketched in catching surveys by Pe\v{c}ari\'c--Stolarsky 
\cite{PecSto01}, Duncan--McGregor \cite{DucMcG03}, and in a recent book of 
Kufner--Maligranda--Persson \cite{KufMalPer07}.

In a more general setting, for a given mean $M \colon \bigcup_{n=1}^{\infty} I^n \to I$ (where $I$ is a real interval 
with $\inf I=0$), let $\Hc(M)$ denote the smallest nonnegative extended real number, called the \emph{Hardy 
constant of $M$}, such that
\Eq{*}{
\sum_{n=1}^\infty M(x_1,\dots,x_n)\le \Hc(M)\sum_{n=1}^\infty x_n
}
for all sequences $(x_n)_{n=1}^\infty$ belonging to $I$. If $\Hc(M)$ is finite, then we say that $M$ is 
a \emph{Hardy mean}. In this setup, a $p$-th power mean is a Hardy mean if and only if $p\in[-\infty,1)$ and 
$\Hc(\P_p)=C(p)$ for all $p\in[-\infty,+\infty]$.

For investigating the Hardy property of means, we recall several notions that have been partly introduced and used 
in the paper \cite{PalPas16}. Let $I\subseteq\R$ be an interval and let $M \colon\bigcup_{n=1}^{\infty} I^n \to I$ be 
an arbitrary mean.

We say that $M$ is \emph{symmetric}, \emph{(strictly) increasing}, and \emph{Jensen convex (concave)} if, for all 
$n\in\N$, the $n$-variable restriction $M|_{I^n}$ is a symmetric, (strictly) increasing in each of its variables, and 
Jensen convex (concave) on $I^n$, respectively. It is worth mentioning that means are locally bounded functions, 
therefore, the so-called Bernstein--Doetsch theorem implies that Jensen convexity (concavity) is equivalent to 
ordinary convexity (concavity) (cf.\ \cite{BerDoe15}). If $I=\R_+$, we can analogously define the notion of homogeneity 
of $M$. Finally, the mean $M$ is called 
\emph{repetition invariant} if, for all $n,m\in\N$ and $(x_1,\dots,x_n)\in I^n$, the following identity is satisfied
\Eq{*}{
  M(\underbrace{x_1,\dots,x_1}_{m\text{-times}},\dots,\underbrace{x_n,\dots,x_n}_{m\text{-times}})
   =M(x_1,\dots,x_n).
}

Having all these definitions, let us recall the two main theorems of the paper \cite{PalPas16}.
The first result provides a lower estimation of Hardy constant.

\begin{thm}\label{thm:PalPas16}
Let $I \subset \R_+$ be an interval with $\inf I=0$ and $M \colon \bigcup_{n=1}^{\infty} I^n \to I$ be a mean. Then,
for all non-summable sequences $(x_n)_{n=1}^{\infty}$ in $I$,
\Eq{*}{
\Hc(M) \ge \liminf_{n \to \infty} x_n^{-1} \cdot M\left(x_1,x_2,\ldots,x_n\right).
}
In particular,
\Eq{*}{
\Hc(M) \ge \sup_{y\in I} \liminf_{n \to \infty} \frac ny \cdot M\left(\frac y1,\frac y2,\ldots,\frac yn 
\right).
}
\end{thm}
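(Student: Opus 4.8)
The plan is to establish the first inequality directly from the definition of the Hardy constant by testing the defining inequality against suitable tail-truncations of a given non-summable sequence, and then to derive the second inequality as a special case.

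First I would fix a non-summable sequence $(x_n)_{n=1}^\infty$ in $I$ and, for each $N\in\N$, apply the definition of $\Hc(M)$ to the finite sequence $(x_1,\dots,x_N)$ padded by zeros (or, since the sequence is non-summable, simply use the partial-sum form of the inequality). The key observation is that for every $n$ we have the single-term bound $M(x_1,\dots,x_n)\le\Hc(M)\,x_n$ only after summing is replaced by a telescoping/tail argument; more precisely, I would compare the inequality for the length-$n$ prefix with the inequality for the length-$(n-1)$ prefix, so that the incremental contribution $M(x_1,\dots,x_n)$ on the left is controlled by $\Hc(M)\,x_n$ on the right. Summing the defining inequality and examining the behaviour of the ratio $x_n^{-1}M(x_1,\dots,x_n)$ along the sequence, non-summability of $(x_n)$ guarantees that the right-hand side $\Hc(M)\sum x_n$ diverges, which forces $\liminf_{n\to\infty} x_n^{-1}M(x_1,\dots,x_n)\le\Hc(M)$; otherwise the left-hand side would eventually dominate term-by-term and also diverge at a strictly faster rate, contradicting the finiteness of the constant $\Hc(M)$.

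To make the contradiction precise, I would suppose $\liminf_{n\to\infty} x_n^{-1}M(x_1,\dots,x_n) = L > \Hc(M)$ and pick $\varepsilon>0$ with $L-\varepsilon>\Hc(M)$; then for all large $n$ we have $M(x_1,\dots,x_n)\ge(L-\varepsilon)x_n$. Summing over $n$ and using non-summability of $(x_n)$ to absorb the finitely many small initial terms, the left-hand side $\sum_n M(x_1,\dots,x_n)$ is bounded below by $(L-\varepsilon)\sum_n x_n$ minus a finite constant, while the defining inequality bounds it above by $\Hc(M)\sum_n x_n$. Dividing by the divergent quantity $\sum_{n=1}^N x_n$ and letting $N\to\infty$ yields $L-\varepsilon\le\Hc(M)$, the desired contradiction.

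For the second, ``in particular'' statement, I would apply the first inequality to the specific sequence $x_n=y/n$ for a fixed $y\in I$. This sequence is non-summable (harmonic), and it lies in $I$ for $y$ small enough relative to $I$; since $\inf I=0$, the relevant values of $y$ are available. The first inequality then gives $\Hc(M)\ge\liminf_{n\to\infty}(y/n)^{-1}M(y/1,\dots,y/n)=\liminf_{n\to\infty}\frac{n}{y}M\bigl(\frac y1,\dots,\frac yn\bigr)$, and taking the supremum over all admissible $y\in I$ finishes the proof. The main obstacle I anticipate is the careful handling of the tail argument so that the finitely many initial terms of the sequence do not interfere with the limiting ratio; this is where non-summability of $(x_n)$ is essential, as it ensures those initial terms become negligible after division by the divergent partial sums.
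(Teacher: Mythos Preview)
The present paper does not prove this theorem; it is recalled from \cite{PalPas16} without proof, so there is nothing here to compare your argument against directly. I therefore assess the proposal on its own.

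Your contradiction in the second paragraph has the right structure but a genuine gap. It hinges on the \emph{partial-sum} inequality
\[
\sum_{n=1}^{N} M(x_1,\dots,x_n)\;\le\; \Hc(M)\sum_{n=1}^{N} x_n \qquad (N\in\N),
\]
which you invoke when you write ``the defining inequality bounds it above by $\Hc(M)\sum_n x_n$'' and then divide by $\sum_{n=1}^{N}x_n$. However, the definition of $\Hc(M)$ concerns only \emph{infinite} sums, and for a non-summable $(x_n)$ the right-hand side of the Hardy inequality is $+\infty$, yielding no information. Your ``padding by zeros'' does not work either, since $I\subset\R_+$ forces $0\notin I$. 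The repair is to exploit $\inf I=0$: given $N$ and $\varepsilon>0$, extend $(x_1,\dots,x_N)$ to a summable sequence $(z_n)$ in $I$ with $\sum_{n>N}z_n<\varepsilon$; applying the Hardy inequality to $(z_n)$ and discarding the positive tail terms on the left gives $\sum_{n=1}^N M(x_1,\dots,x_n)\le\Hc(M)\big(\sum_{n=1}^N x_n+\varepsilon\big)$, and letting $\varepsilon\to0$ yields the displayed partial-sum bound. With this lemma in hand, your division-by-partial-sums argument goes through cleanly.

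Two smaller points. The telescoping idea in your first paragraph, suggesting a term-by-term bound $M(x_1,\dots,x_n)\le\Hc(M)\,x_n$, is simply false (take $M$ the geometric mean with $x_1\gg x_2$); you rightly abandon it, so it should be dropped. For the ``in particular'' part, since $I\subset\R_+$ is an interval with $\inf I=0$, one has $(0,y]\subset I$ for every $y\in I$, so $y/n\in I$ for all $n\ge1$ and all $y\in I$; no restriction to ``$y$ small enough'' is needed, and the supremum may be taken over all of $I$ as stated.
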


Under stronger assumptions for the mean $M$, the latter lower estimate obtained above becomes equality by the following 
result.

\begin{thm}\label{thm:mainPaPa}
Let $M \colon \bigcup_{n=1}^{\infty} \R_+^n \to\R_+$ be an increasing, symmetric, repetition invariant, and Jensen 
concave mean. Then
\Eq{H1}{
\Hc(M)=\sup_{y>0} \liminf_{n \to \infty} \frac ny \cdot M\left(\frac y1,\frac y2,\ldots,\frac yn 
\right).
}
If, in addition, $M$ is also homogeneous, then
\Eq{*}{
  \Hc(M)= \lim_{n \to \infty} n \cdot M\left(1,\tfrac 12,\ldots,\tfrac 1n \right),
}
in particular this limit exists.
\end{thm}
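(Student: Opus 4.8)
The plan is to prove the two displayed identities separately, deducing the second from the first. Throughout, write $\lambda:=\sup_{y>0}\liminf_{n\to\infty}\frac ny M(\frac y1,\dots,\frac yn)$ for the right-hand side of \eq{H1}. The lower estimate $\Hc(M)\ge\lambda$ is already available: it is exactly the second inequality of Theorem \ref{thm:PalPas16}, applied to the non-summable sequences $x_n=y/n$ and then optimized in $y$. Hence the entire content of the first assertion is the reverse inequality $\Hc(M)\le\lambda$, that is, the Hardy inequality $\sum_n M(x_1,\dots,x_n)\le\lambda\sum_n x_n$ for every positive sequence.

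For that upper bound I would first reduce the class of sequences to be tested. If $\sum_n x_n=\infty$ there is nothing to prove, so assume $\sum_n x_n<\infty$; then $x_n\to0$ and the sequence admits a non-increasing rearrangement. Since $M$ is symmetric, each term $M(x_1,\dots,x_n)$ depends only on the multiset $\{x_1,\dots,x_n\}$, while the functional $\sum_n M(x_1,\dots,x_n)$ can only grow when larger entries are moved to the front: as $M$ is increasing, swapping an adjacent pair into decreasing order does not decrease the single affected partial mean. Because $\sum_n x_n$ is insensitive to order, it therefore suffices to establish the inequality for non-increasing summable sequences.

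The heart of the matter is then to pass from an arbitrary non-increasing sequence to the harmonic test sequences $k\mapsto y/k$ that define $\lambda$. Here I would exploit Jensen concavity together with symmetry (so that $M$ is Schur-concave and in particular $M(x_1,\dots,x_n)\le\frac1n\sum_{k=1}^n x_k$) and repetition invariance, which permits bringing tuples of different lengths to a common length before comparing them entrywise or in the majorization order. The goal is a pointwise estimate bounding $M(x_1,\dots,x_n)$ by the value the mean takes on the harmonic profile of the matching local scale, after which a summation-by-parts / Stolz--Cesàro argument converts these pointwise bounds into $\sum_n M(x_1,\dots,x_n)\le\lambda\sum_n x_n$; it is precisely at this conversion that only the \emph{liminf} in the definition of $\lambda$ is needed, while the supremum over $y$ absorbs the varying scale of $(x_n)$. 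I expect this reduction to the harmonic extremals to be the main obstacle, since the naive entrywise and equal-sum majorization comparisons do not hold outright, so the comparison must be organized carefully (for instance by matching scales blockwise and controlling the error through the tail).

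For the homogeneous refinement I would first simplify the formula: homogeneity gives $\frac ny M(\frac y1,\dots,\frac yn)=n\,M(1,\tfrac12,\dots,\tfrac1n)=:a_n$ independently of $y$, so $\lambda=\liminf_{n\to\infty}a_n$ and the first part already yields $\Hc(M)=\liminf_{n\to\infty}a_n$. It then remains to upgrade this $\liminf$ to a genuine limit, i.e.\ to show that $(a_n)$ converges. This is the second, more delicate point: the lower-bound theorem only ever detects the liminf of such ratios, so convergence cannot be read off from $\Hc(M)=\lambda$ alone and must come from the concavity structure. I would prove that $(a_n)$ is non-decreasing (consistent with all standard examples, where $a_n$ increases to $C(p)$), writing $a_n=M(n,\tfrac n2,\dots,\tfrac nn)$ by homogeneity and comparing the consecutive decreasing tuples $D_n$ and $D_{n+1}$ after equalizing their lengths via repetition invariance; once monotonicity is in hand, $\lim_n a_n=\liminf_n a_n=\Hc(M)$, which is the claim. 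Establishing this monotonicity cleanly---since neither the plain entrywise order nor equal-sum majorization survives the change of length---is where the remaining work lies.
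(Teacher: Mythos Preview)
The paper does not contain a proof of this statement. Theorem~\ref{thm:mainPaPa} is explicitly introduced with the words ``let us recall the two main theorems of the paper \cite{PalPas16}''; it is quoted from that earlier work and used here as a black box. Consequently there is no proof in the present paper against which your proposal can be compared.

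As to the proposal itself, it is a reasonable \emph{outline} of what a proof would have to accomplish, and the lower bound $\Hc(M)\ge\lambda$ via Theorem~\ref{thm:PalPas16} is indeed immediate. But you yourself flag the two essential steps as unresolved: (i) the passage from an arbitrary non-increasing summable sequence to the harmonic profiles $k\mapsto y/k$, where you concede that ``the naive entrywise and equal-sum majorization comparisons do not hold outright'' and leave the actual mechanism unspecified; and (ii) the monotonicity of $a_n=n\,M(1,\tfrac12,\dots,\tfrac1n)$ in the homogeneous case, where you again acknowledge that neither entrywise order nor majorization survives the length change. These are precisely the places where the substance of the argument in \cite{PalPas16} lies, so in its present form the proposal is a plan with two admitted gaps rather than a proof.
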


Upon taking $M$ to be a power mean in the above theorem, the Hardy--Landau--Knopp--Carleman inequality \eq{H} can 
easily be deduced. For the details, see \cite{PalPas16}.

The purpose of our paper is to find explicit formulas for the constant on the right hand side of equation \eq{H1} in 
two important classes of means: quasi-arithmetic means and homogeneous deviation means. On the other hand we will 
present some iff conditions for these means to satisfy assumptions of theorem above.

\section{Families of means}

From now on our consideration will go twofold. First, we will introduce and recall the most important results 
concerning quasi-arithmetic means. Later, in Section~\ref{sec:fam_dev}, we will do the same with the family of 
deviation means. This splitting will appear in the next section too. 

\subsection{\label{sec:fam_qa}Quasi-arithmetic means}

Idea of quasi-arithmetic means first only glimpsed in a pioneering paper by Knopp \cite{Kno28}. Their theory was 
somewhat later axiomatized in a series of three independent but nearly simultaneous papers by 
De Finetti \cite{Def31}, Kolmogorov \cite{Kol30}, and Nagumo \cite{Nag30} at the beginning of 1930s. 

Let $I$ be an interval and $f \colon I \to \R$ be a continuous, strictly monotone function. For $n \in \N$ and for a 
given vector $x=(x_1,\dots,x_n)\in I^n$, set
\Eq{*}{
  \QA{f}(x):=f^{-1} \left( \frac{f(x_1)+\cdots+f(x_n)}n \right).
}
The mean $\QA{f} \colon \bigcup_{n=1}^{\infty} I^n \to I$ defined this way is called the \emph{quasi-arithmetic mean 
generated by the function $f$}. Quasi-arithmetic means are a natural generalization of power means. Indeed, whenever 
$I=\R_+$ and $f=\pi_p$, where $\pi_p(x):=x^p$ if $p\ne 0$ and $\pi_0(x):=\ln x$, then mean $\QA{f}$ coincides 
with $p$-th power mean (this is what was noticed by Knopp \cite{Kno28}). These means share most of the properties of 
power means. In particular, it is easy to verify that they are symmetric, strictly increasing, and repetition invariant. 
In fact, they admit even more properties of power means (cf. \cite{Kol30}, \cite{Acz48a}).

Immediately after the formal definition, Mulholland \cite{Mul32} characterized the Hardy property in the class of 
quasi-arithmetic means. Namely, he proved that
\Eq{Mul}{
\QA{f} \text{ is a Hardy mean } \iff 
\bigg ( \begin{array}{c}\text{ there exist parameters }q<1 \text{ and }C>0 \\[1mm]
\text{ such that } \QA{f}(x) \le C \cdot \P_q(x) \text{ for all } x\end{array}\bigg).
}
%\QA{f} enjoys the Hardy property if and only if it can 
%be majorized up to a constant factor by a power mean with parameter strictly less than one.

Later, in 1948, Mikusi\'nski \cite{Mik48} proved that the comparability problem within this family can be (under natural smoothness 
assumptions) boiled down to pointwise comparability of the mapping $f\mapsto \frac{f''}{f'}$ (negative of this 
operator is called the \emph{Arrow--Pratt index of absolute risk aversion}; cf.\ \cite{Arr65,Pra64} ). More 
precisely, he proved

\begin{prop}\label{Mik}
Let $I \subset \R$ be an interval, $f,\,g \colon I \to \R$ be twice differentiable functions having nowhere vanishing  
first derivative. Then the following two conditions are equivalent
 \begin{itemize}
 \item[\upshape{(i)}] $\QA{f}(x_1,\dots,x_n)\leq\QA{g}(x_1,\dots,x_n)$ 
 for all $n\in\N$ and vector $(x_1,\dots,x_n)\in I^n$;
 \item[\upshape{(ii)}] $\frac{f''(x)}{f'(x)}\leq \frac{g''(x)}{g'(x)}$ for all $x\in I$.
 \end{itemize}
\end{prop}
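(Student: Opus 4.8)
The plan is to collapse the comparison in (i) into a single convexity condition for the function $\varphi:=g\circ f^{-1}$ and then to translate that condition into the differential inequality (ii). First I would exploit the elementary invariances of both sides. Since $(-f)^{-1}(t)=f^{-1}(-t)$, a one-line computation gives $\QA{-f}=\QA{f}$, and clearly $\frac{(-f)''}{(-f)'}=\frac{f''}{f'}$; the same holds for $g$. As $f'$ and $g'$ are continuous (being differentiable) and nowhere vanishing, each has constant sign, so after possibly replacing $f$ by $-f$ and $g$ by $-g$ — which changes neither (i) nor (ii) — I may assume $f'>0$ and $g'>0$ throughout $I$.

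The core reduction is the substitution $u_i:=f(x_i)$. With $\varphi:=g\circ f^{-1}$ (a map from $f(I)$ into $g(I)$), one has $\QA{f}(x)=f^{-1}\big(\tfrac1n\sum u_i\big)$ and $\QA{g}(x)=g^{-1}\big(\tfrac1n\sum\varphi(u_i)\big)$, so applying the increasing function $g$ to both sides of $\QA{f}(x)\le\QA{g}(x)$ turns it into
\Eq{*}{ \varphi\Big(\tfrac1n\sum_{i=1}^n u_i\Big)\le \tfrac1n\sum_{i=1}^n\varphi(u_i). }
Since $f$ is a bijection $I\to f(I)$, as $x$ ranges over $I^n$ and $n$ over $\N$ the tuples $(u_1,\dots,u_n)$ exhaust all finite tuples in $f(I)$; hence (i) is equivalent to the discrete Jensen inequality for $\varphi$ with equal weights. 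Taking $n=2$ this is precisely midpoint (Jensen) convexity of $\varphi$, while convexity conversely returns the inequality for every $n$. Thus (i) holds if and only if $\varphi$ is Jensen convex.

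Next I would upgrade Jensen convexity to ordinary convexity and differentiate. Because $f'$ is continuous and nonvanishing, $f^{-1}$ is twice differentiable (with $(f^{-1})'(y)=1/f'(x)$ for $x=f^{-1}(y)$), hence so is $\varphi$; in particular $\varphi$ is continuous, and the Bernstein--Doetsch theorem (cf.\ \cite{BerDoe15}) makes its Jensen convexity equivalent to $\varphi''\ge 0$ on $f(I)$. A routine computation yields
\Eq{*}{ \varphi''(y)=\frac{g''(x)f'(x)-g'(x)f''(x)}{f'(x)^3}. }
Since $f'>0$, the sign of $\varphi''$ is that of $g''f'-g'f''$, and dividing by $f'g'>0$ shows $\varphi''\ge0$ is equivalent to $\frac{f''}{f'}\le\frac{g''}{g'}$, i.e.\ to (ii). Chaining (i) $\iff$ $\varphi$ Jensen convex $\iff$ $\varphi''\ge 0$ $\iff$ (ii) finishes the argument.

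I expect the genuinely delicate points to be bookkeeping rather than substance: verifying that $f\mapsto -f$ leaves both conditions invariant so the normalization $f',g'>0$ is harmless, and checking that the regularity is strong enough — that $f^{-1}$ and $\varphi$ are actually twice differentiable — to invoke both Bernstein--Doetsch and the pointwise second-derivative criterion for convexity. The substitution $u_i=f(x_i)$ that converts the mean comparison into Jensen's inequality is the conceptual heart, and it is short once the normalization is in place.
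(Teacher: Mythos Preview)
The paper does not supply its own proof of this proposition; it is quoted as a classical result of Mikusi\'nski \cite{Mik48} and used as a black box. Your argument is correct and is precisely the standard route to this theorem: normalize to $f',g'>0$ using the invariance $\QA{-f}=\QA{f}$, substitute $u_i=f(x_i)$ to turn the comparison $\QA{f}\le\QA{g}$ into the discrete Jensen inequality for $\varphi=g\circ f^{-1}$, upgrade midpoint convexity to convexity via continuity, and then compute $\varphi''(f(x))=\big(g''(x)f'(x)-g'(x)f''(x)\big)/f'(x)^3$ to recover (ii). The regularity checks you flag (that $f^{-1}$ and hence $\varphi$ are twice differentiable under the hypothesis, and that $f',g'$ have constant sign) are all in order.
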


Obviously, if $I\subseteq\R_+$, then condition (ii) can be equivalently written as
\Eq{*}{
  \kappa_f(x):=\frac{xf''(x)}{f'(x)}+1\leq\frac{xg''(x)}{g'(x)}+1=:\kappa_g(x) \qquad(x\in I).
}
It is easy to verify (with the notation $\pi_p$ introduced above), that the equality $\kappa_{\pi_p}\equiv p$ 
holds for all $p\in\R$. Therefore, in view of Proposition~\ref{Mik}, we have
\Eq{*}{
\P_q=\QA{\pi_q} \le \QA{f} \le \QA{\pi_p}=\P_p,
}
where $q:=\inf_I\kappa_f$ and $p:=\sup_I\kappa_f$, moreover 
these parameters are sharp. In other words, the operator $\kappa_{(\cdot)}$ could be applied to embed 
quasi-arithmetic means into the scale of power means (cf. \cite{Pas13}). As a trivial consequence we obtain a 
natural estimations of Hardy constants
\Eq{Pow_es}{
C(q) \le \Hc(\QA{f})\le C(p).
}

In the next result we characterize Jensen concave quasi-arithmetic means. Some results in this direction have recently 
been obtained in the paper \cite{CGJJ17}.

\begin{thm}\label{concQA}
Let $f\colon I \to\R$ be a twice continuously differentiable function with a nonvanishing first derivative. 
Then the quasi-arithmetic mean $\QA{f}$ is Jensen concave if and only if either $f''$ is identically zero or $f''$ is 
nowhere zero and the ratio function $\frac{f'}{f''}$ is a convex and negative function on $I$.
\end{thm}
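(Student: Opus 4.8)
The plan is to use that, since $f$ is twice continuously differentiable, each restriction $\QA{f}|_{I^n}$ is of class $C^2$ (as $f^{-1}$ is $C^2$ because $f'$ does not vanish), so by the Bernstein--Doetsch theorem mentioned above Jensen concavity coincides with ordinary concavity, which for a $C^2$ map means that its Hessian is negative semidefinite at every point. First I would compute this Hessian. Writing $S=\frac1n\sum_{j=1}^n f(x_j)$, $\xi=\QA{f}(x)=f^{-1}(S)$ and $g=f^{-1}$ (so that $g'(S)=1/f'(\xi)$ and $g''(S)=-f''(\xi)/f'(\xi)^3$), the second partials assemble into
\[
H_{jk}=\frac{g''(S)}{n^2}\,f'(x_j)f'(x_k)+\frac{g'(S)}{n}\,f''(x_j)\delta_{jk},
\]
so concavity is equivalent to $Q(v):=\frac{g''(S)}{n^2}\big(\sum_j f'(x_j)v_j\big)^2+\frac{g'(S)}{n}\sum_j f''(x_j)v_j^2\le 0$ for every $v\in\R^n$ and every configuration $x\in I^n$.

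Next I would read off the sign and the zero structure of $f''$ by testing $Q$ on special vectors. Since $f'$ is continuous and nonvanishing it has a constant sign, and because neither $\QA{f}$ nor the ratio $f'/f''$ changes when $f$ is replaced by $-f$, I may assume $f'>0$, whence $g'(S)>0$. On the diagonal $x_1=\cdots=x_n=x$ the form factors as $Q(v)=\frac{f''(x)}{f'(x)}\big(\frac1n\sum v_j^2-\frac1{n^2}(\sum v_j)^2\big)$, and since the bracket is nonnegative (and positive for nonconstant $v$), concavity forces $f''/f'\le 0$ everywhere, i.e. $f'/f''<0$ wherever $f''\ne 0$; this is the asserted negativity. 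Testing $Q$ on $v=e_j$ gives $\frac{g''(S)}{n^2}f'(x_j)^2+\frac{g'(S)}{n}f''(x_j)\le 0$, so $f''(x_j)=0$ forces $g''(S)\le 0$, that is $f''(\QA{f}(x))=0$; letting the remaining variables vary shows the zero set of $f''$ is open, hence (being closed in the interval $I$) empty or all of $I$. The alternative $f''\equiv0$ is exactly the first case of the theorem (then $f$ is affine and $\QA{f}$ is the concave arithmetic mean), so from now on I assume $f''$ nowhere zero, hence $f''<0$.

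With $f''<0$ the diagonal part of $H$ is negative definite while the remaining term is a positive-semidefinite rank-one matrix; negative semidefiniteness of such a sum is, by the equality case of the Cauchy--Schwarz inequality, equivalent to $\frac{g''(S)}{n}\sum_j \frac{f'(x_j)^2}{-f''(x_j)}\le g'(S)$. Substituting the expressions for $g'$ and $g''$ and writing $u:=-f'^2/f''>0$, the powers of $n$ cancel and this collapses to the single scalar inequality
\[
\frac1n\sum_{j=1}^n u(x_j)\le u\big(\QA{f}(x)\big),
\]
required for all $n$ and all $x$. After the substitution $t_j=f(x_j)$ this states precisely that $U:=u\circ f^{-1}$ is Jensen, hence ordinary, concave on $f(I)$.

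It remains to identify concavity of $U=u\circ f^{-1}$ with convexity of $h:=f'/f''$. Using $u=-f'h$ and the chain rule one finds $U'=-1-h'$ (viewed as a function of $x=f^{-1}(t)$) and then $U''(t)=-h''(x)/f'(x)$; as $f'>0$, the mean $U$ is concave exactly when $h$ is convex, which together with the negativity established above closes the equivalence. I expect the main obstacle to be twofold. The first is making the passage from the matrix condition $Q\le 0$ to the single inequality $(\dagger)$ a genuine equivalence: this is where the sharpness of the Cauchy--Schwarz step and the collapse of the $n$-dependence are essential. The second is the regularity bookkeeping in the final step, since $f$ being merely $C^2$ means $u$, $U$ and $h$ need not be twice differentiable; the identity $U''=-h''/f'$ must therefore be read at the level of monotonicity of difference quotients, using that $h$ is convex iff its slopes are increasing iff $U$ is concave (the increasing reparametrization $f^{-1}$ preserving monotonicity), or else justified by approximating $f$ with smoother generators, both conditions being stable under the relevant limits.
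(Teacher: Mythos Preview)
Your argument is correct and takes a genuinely different route from the paper's own proof. The paper reformulates concavity of $\QA{f}$ as convexity of the two-variable, one-parameter family $F_t(u,v)=f\big(tf^{-1}(u)+(1-t)f^{-1}(v)\big)$ on $f(I)^2$, and then applies the Sylvester criterion to the $2\times2$ Hessian of $F_t$; the diagonal entries force $f''\le 0$ (and yield the zero-set dichotomy via a pointwise inequality in $t$), while the determinant condition directly reads as the convexity inequality for $f'/f''$. You instead compute the full $n$-variable Hessian of $\QA{f}$, recognise it as a positive-semidefinite rank-one perturbation of a negative-definite diagonal matrix, and reduce negative semidefiniteness via the sharp Cauchy--Schwarz step to the single scalar inequality $\tfrac1n\sum u(x_j)\le u\big(\QA{f}(x)\big)$ with $u=-f'^2/f''$, i.e.\ to Jensen concavity of $u\circ f^{-1}$; one more differentiation (at the level of monotonicity of $h'$, as you correctly note to avoid assuming a nonexistent $h''$) identifies this with convexity of $h=f'/f''$. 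The paper's approach keeps the linear algebra elementary and obtains the convexity of $f'/f''$ in one stroke; your approach uncovers the intermediate characterisation ``$u\circ f^{-1}$ concave'', which is a pleasant structural statement in its own right, at the cost of one additional translation step. Two small points worth tightening in a final write-up: in the jump ``$g''(S)\le 0$, that is $f''(\QA{f}(x))=0$'' you are silently using that $f''\le 0$ already gives $g''(S)\ge 0$; and the sentence ``letting the remaining variables vary shows the zero set of $f''$ is open'' deserves one line making explicit that, for $n=2$, $y\mapsto\QA{f}(x_0,y)$ sweeps an interval around $x_0$, whence the (closed) zero set is open and hence all of $I$ by connectedness.
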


\begin{proof} Without loss of generality, we may assume that $f$ is strictly increasing, then $f'>0$ holds on 
$I$. The Jensen concavity of the mean $\QA{f}$ is equivalent to its concavity, that is, for all $n\in\N$ and for 
all $x=(x_1,\dots,x_n),y=(y_1,\dots,y_n)\in I^n$, $t\in[0,1]$, we have
\Eq{*}{
  \QA{f}(tx+(1-t)y)\geq t\QA{f}(x)+(1-t)\QA{f}(y),
}
in more detailed form,
\Eq{*}{
  &f^{-1}\bigg(\frac{f(tx_1+(1-t)y_1)+\cdots+f(tx_n+(1-t)y_n)}{n}\bigg) \\
  &\qquad\qquad\geq tf^{-1}\bigg(\frac{f(x_1)+\cdots+f(x_n)}{n}\bigg)
       +(1-t)f^{-1}\bigg(\frac{f(y_1)+\cdots+f(y_n)}{n}\bigg).
}
Applying $f$ to this inequality side by side and introducing the new variables $u_i:=f(x_i)$ and 
$v_i:=f(y_i)$, we get that the above inequality is equivalent to
\Eq{*}{
  &\frac{f(tf^{-1}(u_1)+(1-t)f^{-1}(v_1))+\cdots+f(tf^{-1}(u_n)+(1-t)f^{-1}(v_n))}{n} \\
  &\qquad\qquad\geq f\bigg(tf^{-1}\bigg(\frac{u_1+\cdots+u_n}{n}\bigg)
       +(1-t)f^{-1}\bigg(\frac{v_1+\cdots+v_n}{n}\bigg)\bigg)
}
for all $u_1,\dots,u_n,v_1,\dots,v_n\in f(I)=:J$. The meaning of this inequality is exactly the Jensen 
convexity of the function 
\Eq{*}{
  F_t(u,v):=f(tf^{-1}(u)+(1-t)f^{-1}(v))
}
on $J^2$. By the continuity of $F_t$, this property is equivalent to the convexity of $F_t$. In view of the 
regularity 
assumptions of the theorem, this property is satisfied if and only if the second derivative matrix 
$F''_t(u,v)$ is 
positive semidefinite for all $u,v\in J$ and $t\in(0,1)$. This is equivalent to the positive semidefiniteness 
of 
$F''_t(f(x),f(y))$ for all $x,y\in I$ and $t\in(0,1)$. By the Sylvester determinant test, this $2\times2$ 
matrix is 
positive semidefinite if and only if 
\Eq{1T}{
  \partial_1^2 F_t(f(x),f(y))\geq0, \qquad
  \partial_2^2 F_t(f(x),f(y))\geq0
}
and 
\Eq{2T}{
  \partial_1^2 F_t(f(x),f(y))\partial_2^2 F_t(f(x),f(y))-\partial_1\partial_2 F_t(f(x),f(y))^2\geq0.
}
We can easily obtain that 
\Eq{*}{
  \partial_1^2 F_t(f(x),f(y))&=\frac{t^2f''(tx+(1-t)y)}{f'(x)^2}-\frac{tf'(tx+(1-t)y)f''(x)}{f'(x)^3}, \\
  \partial_2^2 F_t(f(x),f(y))&=\frac{(1-t)^2f''(tx+(1-t)y)}{f'(y)^2}
  -\frac{(1-t)f'(tx+(1-t)y)f''(y)}{f'(y)^3}, \\
  \partial_1\partial_2 F_t(f(x),f(y))&=\frac{t(1-t)f''(tx+(1-t)y)}{f'(x)f'(y)}.
}
Therefore, the first inequality in \eq{1T}, is equivalent to 
\Eq{*}{
  \frac{tf''(tx+(1-t)y)}{f'(tx+(1-t)y)}\geq\frac{f''(x)}{f'(x)}.
}
Putting $x=y$, it follows that $f''(x)\leq0$ for all $x\in I$. If, for some $x\in I$, $f''(x)$ were zero, then 
this inequality and the nonpositivity of $f''$ implies that $f''(tx+(1-t)y)=0$ for all $y\in I$ and 
$t\in(0,1)$. Letting $t$ tend to zero, we get that $f''(y)=0$ for all $y\in I$. Therefore, from now on, we may 
assume that $f''$ is strictly negative on $I$.

The inequality in \eq{2T} can now be rewritten in the following form:
\Eq{conv}{
  t\frac{f'(x)}{f''(x)}+(1-t)\frac{f'(y)}{f''(y)}
  \geq\frac{f'(tx+(1-t)y)}{f''(tx+(1-t)y)} \qquad(x,y\in I,\,t\in(0,1)).
}
Thus, we have proved that the ratio function $f'/f''$ is convex and negative.

For the reversed implication, assume that either $f''=0$, or $f''$ is nowhere zero and $f'/f''$ is convex and 
negative. If $f''=0$, then $f(x)=ax+b$ for some real constants $a,b$, hence $\QA{f}$ equals the arithmetic 
mean, which is trivially Jensen concave (and also Jensen convex). 

Without loss of generality, we again may assume  that $f'$ is positive. Suppose now that $f''$ is negative 
and 
$f'/f''$ is convex. As we have seen above, the negativity of the function $f''$ and the convexity of $f'/f''$, 
which 
is expressed by inequality \eq{conv} imply that \eq{2T} is satisfied. Using that $f''$ is negative (recall 
that $f'$ is positive), inequality \eq{conv} yields that
\Eq{*}{
  t\frac{f'(x)}{f''(x)}\geq\frac{f'(tx+(1-t)y)}{f''(tx+(1-t)y)} 
  \quad\mbox{and}\quad
  (1-t)\frac{f'(y)}{f''(y)}\geq\frac{f'(tx+(1-t)y)}{f''(tx+(1-t)y)} 
  \qquad(x,y\in I,\,t\in(0,1)).
}
A simple computation now shows that the two inequalities in \eq{1T} are also fulfilled. Therefore, the second 
derivative matrix of $F_t$ is positive semidefinite on $J^2$, which implies the convexity of $F_t$ for all 
$t\in(0,1)$. However, this property is equivalent to the concavity of the quasi-arithmetic mean $\QA{f}$.
\end{proof}

\subsection{\label{sec:fam_dev}Deviation means}

Given a function $E \colon I \times I \to \R$ vanishing on the diagonal of $I\times I$, continuous and 
strictly decreasing with respect to the second variable, we can define a mean $\D_E\colon 
\bigcup_{n=1}^{\infty} I^n \to I$ in 
the following manner (cf.\ Dar\'oczy \cite{Dar71b}). For every $n\in\N$ and for every vector $x 
=(x_1,\dots,x_n)\in 
I^n$, the \emph{deviation mean (or Dar\'oczy mean)} $\D_E(x)$ is the unique solution $y$ of the equation
\Eq{*}{
E(x_1,y)+\dots+E(x_n,y)=0.
}
By \cite{Pal82a} deviation means are symmetric and repetition invariant. The increasingness of a 
deviation mean $\D_E$ is equivalent to the increasingness of the deviation $E$ in its first variable. All 
these properties and characterizations are consequences of the general results obtained in a series of papers 
by Losonczi \cite{Los70a,Los71a,Los71b,Los71c,Los73a,Los77} (for Bajraktarevi\'c means and Gini means) and by 
Dar\'oczy \cite{Dar71b,Dar72b}, Dar\'oczy--Losonczi \cite{DarLos70}, Dar\'oczy--P\'ales 
\cite{DarPal82,DarPal83} (for deviation means) and by P\'ales 
\cite{Pal82a,Pal83b,Pal84a,Pal85a,Pal88a,Pal88d,Pal88e} (for deviation and quasi-deviation means).

Observe that if $E(x,y)=f(x)-f(y)$ for some continuous, strictly monotone function $f \colon I \to \R$, then 
the deviation mean $\D_E$ reduces to the quasi-arithmetic mean $\QA{f}$. Therefore, deviation means include 
quasi-arithmetic means. One can also notice that Bajraktarevi\'c means and Gini means are also form subclasses 
of deviation means.

It is known \cite{Pal88e} that a deviation mean generated by a continuous deviation function $E\colon 
\R_+^2\to\R$ is homogeneous if and only if $E$ is of the form $E(x,y)=g(y)f(\tfrac xy)$ for some continuous 
functions $f,g\colon\R_+\to\R$ such that $f$ vanishes at $1$ and $g$ is positive. Clearly, the deviation mean 
generated by $E$ is determined only by the function $f$, therefore, as we are going to deal with homogeneous 
deviation means only, let $\E_f$ denote the corresponding deviation mean. In the next section, we will 
determine the Hardy constant for the homogeneous deviation mean $\E_f$ under general circumstances for $f$. 
The following result will be instrumental for our considerations.

\begin{thm}\label{CDM}
Let $f\colon\R_+\to\R$ be a strictly increasing concave function with $f(1)=0$. Then the function 
$E\colon\R_+^2\to\R$ defined by $E(x,y):=f\big(\frac xy\big)$ is a deviation and the corresponding deviation 
mean $\E_f:=\D_E$ is homogeneous, continuous, increasing and concave.
\end{thm}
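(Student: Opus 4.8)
The plan is to verify the asserted properties in turn, treating the concavity as the only substantial step and the remaining ones as essentially direct consequences of the hypotheses on $f$ together with the general theory of deviation means recalled above. First I would check that $E(x,y)=f(x/y)$ genuinely is a deviation. Since $f(1)=0$, we have $E(x,x)=f(1)=0$, so $E$ vanishes on the diagonal; as $f$ is concave on $\R_+$ it is continuous there, and $(x,y)\mapsto x/y$ is continuous on $\R_+^2$, so $E$ is continuous; and for fixed $x$ the map $y\mapsto x/y$ is strictly decreasing while $f$ is strictly increasing, so $y\mapsto E(x,y)$ is strictly decreasing. Thus $E$ is a deviation and $\E_f=\D_E$ is a well-defined mean. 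Homogeneity is immediate: if $y=\E_f(x)$, i.e.\ $\sum_i f(x_i/y)=0$, then $\sum_i f(\lambda x_i/(\lambda y))=0$ for every $\lambda>0$, whence $\E_f(\lambda x)=\lambda\E_f(x)$ (equivalently, one applies the cited characterization from \cite{Pal88e} with $g\equiv1$). For monotonicity, for fixed $y$ the map $x\mapsto x/y$ is strictly increasing and $f$ is strictly increasing, so $E$ is strictly increasing in its first variable; by the quoted equivalence this gives that $\E_f$ is increasing. Continuity of $\E_f$ follows from the general continuity of deviation means, or alternatively from the strict monotonicity of $y\mapsto\sum_i f(x_i/y)$ via an implicit-function argument.

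The heart of the proof is concavity, and here I would exploit that $\E_f$ is positively homogeneous of degree $1$. For such a function concavity is equivalent to superadditivity: once $\E_f(x+x')\ge\E_f(x)+\E_f(x')$ is known for all admissible $x,x'$, then for $t\in[0,1]$,
\[
  \E_f(tx+(1-t)x')\ge\E_f(tx)+\E_f((1-t)x')=t\,\E_f(x)+(1-t)\,\E_f(x'),
\]
where the last equality uses homogeneity. Hence it suffices to establish superadditivity for each fixed number of variables. Writing $u:=\E_f(x)$, $v:=\E_f(x')$ and $w:=\E_f(x+x')$, the defining equations read $\sum_i f(x_i/u)=0$ and $\sum_i f(x_i'/v)=0$, and I must show $u+v\le w$.

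The key observation is the identity
\[
  \frac{x_i+x_i'}{u+v}=\frac{u}{u+v}\cdot\frac{x_i}{u}+\frac{v}{u+v}\cdot\frac{x_i'}{v},
\]
which presents $\tfrac{x_i+x_i'}{u+v}$ as a convex combination of $\tfrac{x_i}{u}$ and $\tfrac{x_i'}{v}$ with the common weights $\tfrac{u}{u+v},\tfrac{v}{u+v}\in(0,1)$. Applying the concavity of $f$ termwise and summing over $i$, the two vanishing sums yield $\sum_i f\bigl(\tfrac{x_i+x_i'}{u+v}\bigr)\ge0$. Since $y\mapsto\sum_i f\bigl(\tfrac{x_i+x_i'}{y}\bigr)$ is strictly decreasing and vanishes precisely at $y=w$, this inequality forces $u+v\le w$, which is exactly the superadditivity sought.

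The main obstacle is conceptual rather than computational: the point is to recognize that concavity should be routed through superadditivity, which is legitimate only because the mean is homogeneous, instead of being attacked directly through a Hessian computation on the implicitly defined $\E_f$ (which would require differentiability of $f$ and a delicate implicit differentiation). With this reduction in hand, the convex-combination identity does all the work, and the concavity of $f$ enters through a single clean application of Jensen's inequality, requiring no regularity of $f$ beyond the continuity already guaranteed by concavity.
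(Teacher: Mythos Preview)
Your proof is correct and follows essentially the same route as the paper's own argument: both verify the deviation axioms, derive homogeneity, monotonicity and continuity directly, and then obtain concavity from the identity $\frac{x_i+x_i'}{u+v}=\frac{u}{u+v}\cdot\frac{x_i}{u}+\frac{v}{u+v}\cdot\frac{x_i'}{v}$ together with the concavity of $f$, which yields $\sum_i f\bigl(\frac{x_i+x_i'}{u+v}\bigr)\ge 0$ and hence $u+v\le \E_f(x+x')$. The only cosmetic difference is that the paper packages this final inequality as midpoint (Jensen) concavity and then upgrades to concavity via continuity, whereas you name it superadditivity and upgrade to concavity via homogeneity; the underlying computation is identical.
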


\begin{proof} It is easy to check, using the continuity (which is a consequence of concavity) and strict 
increasingness 
of $f$ that the function $E$ is a deviation function. The homogeneity of the mean $\E_f$ is obvious. We have 
that $E$ 
is strictly increasing in its first variable, hence, one can show, that the mean $\E_f$ is also strictly 
increasing. 
The continuity of $f$ implies the continuity of the mean directly. We only prove its concavity. 

Let $x=(x_1,\dots,x_n)\in\R_+^n$ and $u=(u_1,\dots,u_n)\in\R_+^n$ and denote $\E_f(x)$ and $\E_f(u)$ by $y$ 
and $v$, 
respectively. Then we have that
\Eq{*}{
  f\Big(\frac{x_1}{y}\Big)+\cdots+f\Big(\frac{x_n}{y}\Big)=0 \qquad\mbox{and}\qquad
  f\Big(\frac{u_1}{v}\Big)+\cdots+f\Big(\frac{u_n}{v}\Big)=0.
}
These equations and the convexity of $f$ follow that
\Eq{*}{
  0&=\frac{y}{y+v}\bigg(f\Big(\frac{x_1}{y}\Big)+\cdots+f\Big(\frac{x_n}{y}\Big)\bigg)
   +\frac{v}{y+v}\bigg(f\Big(\frac{u_1}{v}\Big)+\cdots+f\Big(\frac{u_n}{v}\Big)\bigg)\\
   &=\bigg(\frac{y}{y+v}f\Big(\frac{x_1}{y}\Big)+\frac{v}{y+v}f\Big(\frac{u_1}{v}\Big)\bigg)
     +\cdots+\bigg(\frac{y}{y+v}f\Big(\frac{x_n}{y}\Big)+\frac{v}{y+v}f\Big(\frac{u_n}{v}\Big)\bigg)\\
   &\le f\Big(\frac{y}{y+v}\frac{x_1}{y}+\frac{v}{y+v}\frac{u_1}{v}\Big)
     +\cdots+f\Big(\frac{y}{y+v}\frac{x_n}{y}+\frac{v}{y+v}\frac{u_n}{v}\Big)\\
   &= f\Big(\frac{x_1+u_1}{y+v}\Big)+\cdots+f\Big(\frac{x_n+u_n}{y+v}\Big).
}
Therefore, 
\Eq{*}{
  \frac{\E_f(x)+\E_f(u)}2=\frac{y+v}2\leq \E_f\Big(\frac{x+u}2\Big),
}
which proves that $\E_f$ is Jensen concave. Being also continuous, this property implies the concavity of 
$\E_f$.
\end{proof}

In fact one could strengthen the result of the above theorem, by showing that the concavity of $f$ is not 
only sufficient but also necessary in order that the mean $\E_f$ be concave.

\section{Main Results}

We will apply results contained in \cite{PalPas16} to both quasi-arithmetic and homogeneous deviation means. 
Let us notice that the intersection of these families are power means, which are usually treated as a trivial 
case in a consideration of Hardy property. (See inequality \eq{H}.) 

\subsection{Quasi-arithmetic means}

Let us begin this section with our main result, which allows us to compute the Hardy constant for a large 
class of quasi-arithmetic means.

\begin{thm}
\label{thm:MainQA}
Let $I$ be a real interval with $\inf I=0$ and let $f \colon I \to \R$ be a twice continuously differentiable 
function with nowhere vanishing first derivative. Define
 \Eq{1}
 {
 q:=\liminf_{x \to 0^+} \kappa_f(x) \le \limsup_{x \to 0^+} \kappa_f(x) =:p.
 }
Then, for all $x\in I$,
\Eq{2}{
C(q) \le \liminf_{n \to \infty} \frac nx \QA{f}(\tfrac x1, \tfrac x2 ,\dots, \tfrac xn)
\le \limsup_{n \to \infty} \frac nx \QA{f}(\tfrac x1, \tfrac x2 ,\dots, \tfrac xn) \le C(p)
}
and, consequently, 
\Eq{2+}{
   C(q)\leq\Hc(\QA{f}).
}
\end{thm}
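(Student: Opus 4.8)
The plan is to transfer, through Mikusi\'nski's comparison machinery, the local behaviour of $\kappa_f$ near $0$ into two-sided comparisons between $\QA f$ and nearby power means, and then to read off the asymptotics of $\frac nx\QA f(\tfrac x1,\dots,\tfrac xn)$ from the homogeneous power-mean case, where the value is exactly $C(\cdot)$. Throughout I may assume $f'>0$ (replace $f$ by $-f$ otherwise), so that the sign and size of $\frac{f''}{f'}=\frac{\kappa_f-1}{(\cdot)}$ govern the comparison in Proposition~\ref{Mik}. Fixing $\epsilon>0$, the definition \eqref{1} of $q,p$ as $\liminf$ and $\limsup$ of $\kappa_f$ at $0^+$ provides a $\delta>0$ with $q-\epsilon<\kappa_f<p+\epsilon$ on $(0,\delta)$; since $\kappa_{\pi_r}\equiv r$, Proposition~\ref{Mik} gives the pointwise comparison $\P_{q-\epsilon}(y)\le\QA f(y)\le\P_{p+\epsilon}(y)$ for every tuple $y$ with entries in $(0,\delta)$. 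As benchmark I use that power means are homogeneous, so Theorem~\ref{thm:mainPaPa} yields $\frac nx\P_r(\tfrac x1,\dots,\tfrac xn)=n\P_r(1,\tfrac12,\dots,\tfrac1n)\to C(r)$ for every $x>0$ and $r<1$.

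For $x\in(0,\delta)$ all entries $\frac xk$ already lie in $(0,\delta)$, so the comparison applies to the whole tuple; dividing by $x/n$, letting $n\to\infty$, and using continuity of $r\mapsto C(r)$ together with $\epsilon\downarrow0$ gives \eqref{2} for small $x$. For arbitrary $x\in I$ I split $(\tfrac x1,\dots,\tfrac xn)$ into the fixed head $\{k:\tfrac xk\ge\delta\}$ (a bounded number $N_0=N_0(x,\delta)$ of terms) and the tail, whose entries lie in $(0,\delta)$; write $\tilde S_n$ for $\QA f$ of the tail and $S_n$ for $\QA f$ of the whole tuple. Since $S_n\to0$, for large $n$ every head entry exceeds $\tilde S_n$, so the elementary monotonicity of quasi-arithmetic means (appending entries above the current value raises it) gives $S_n\ge\tilde S_n\ge\P_{q-\epsilon}(\text{tail})$; as an elementary computation shows $\frac nx\P_r(\text{tail})\to C(r)$ as well, this yields $\liminf\frac nx S_n\ge C(q-\epsilon)$, hence the lower half of \eqref{2} and, via Theorem~\ref{thm:PalPas16}, the Corollary \eqref{2+}.

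The upper bound for large $x$ is the obstacle, because $S_n\ge\tilde S_n$ runs the wrong way and the finitely many oversized head terms must be shown not to inflate the scaled limit. From the defining averages, $nf(S_n)=\big(\sum_{\text{head}}f(\tfrac xk)\big)+(n-N_0)f(\tilde S_n)$, so $f(S_n)-f(\tilde S_n)=\tfrac1n\big(H-N_0 f(\tilde S_n)\big)$ with $H:=\sum_{\text{head}}f(\tfrac xk)$ bounded; by the mean value theorem $\frac nx(S_n-\tilde S_n)=\frac{H-N_0 f(\tilde S_n)}{x\,f'(\xi_n)}$ for some $\xi_n\to0^+$ between $\tilde S_n$ and $S_n$. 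The key analytic fact is that $p<1$ forces $f'(0^+)=+\infty$: indeed $\int_0^\delta\frac{f''(t)}{f'(t)}\,dt=\int_0^\delta\frac{\kappa_f(t)-1}{t}\,dt$ diverges to $-\infty$ once $\kappa_f-1$ stays below a negative constant near $0$, i.e.\ $\ln f'(0^+)=+\infty$ (and when $p\ge1$ the bound $C(p)=+\infty$ is vacuous, so $p<1$ may be assumed). Consequently, when $f(0^+)$ is finite the numerator is bounded and $\frac nx(S_n-\tilde S_n)\to0$; when $f(0^+)=-\infty$ the same conclusion follows after comparing the growth of $f(\tilde S_n)$ with that of $f'(\xi_n)$, using $\tilde S_n\asymp\xi_n\asymp x/n$ together with a crude a priori upper bound on $S_n$ to keep $\xi_n$ comparable to $\tilde S_n$.

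Feeding in $\limsup\frac nx\tilde S_n\le C(p+\epsilon)$, which comes from $\tilde S_n\le\P_{p+\epsilon}(\text{tail})\le\P_{p+\epsilon}(\tfrac x1,\dots,\tfrac xn)$ (appending the larger head entries raises the power mean) and the clean full-sequence limit, and then letting $\epsilon\downarrow0$, completes the chain \eqref{2}. I expect this third step — controlling the head of the sequence in the upper estimate, and especially the case $f(0^+)=-\infty$ where the numerator $H-N_0 f(\tilde S_n)$ itself blows up — to be the main difficulty, the whole point being that the divergence $f'(0^+)=+\infty$ (equivalently $p<1$) outpaces it.
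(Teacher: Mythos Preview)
Your lower-bound argument and the upper bound for small $x$ (or when $f(0^+)$ is finite) are correct, but the upper bound in the case $f(0^+)=-\infty$ has a genuine gap that you yourself flag without resolving. The quantity you must drive to zero is $\dfrac{H-N_0 f(\tilde S_n)}{x\,f'(\xi_n)}$ with $\tilde S_n\le\xi_n\le S_n$. Near $0$ one has $f''<0$, so $f'$ is decreasing and the only usable lower bound on $f'(\xi_n)$ is $f'(S_n)$; thus you need an \emph{a priori} upper bound on $S_n$. The ``crude'' bound you allude to, namely $\QA f\le\P_P$ with $P:=\sup_{(0,x]}\kappa_f$, only gives $S_n=O(n^{-1/P})$. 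Combining $f'(t)\ge c_1 t^{\,p+\epsilon-1}$ and $|f(t)|\le c_2 t^{\,q-\epsilon}$ (both obtained by integrating the two-sided control on $\kappa_f$ near $0$) one finds
\[
\frac{|f(\tilde S_n)|}{f'(\xi_n)}\ \lesssim\ n^{\,\epsilon-q-(1-p-\epsilon)/P},
\]
which need not tend to $0$ once $q\le 0$ and $P$ is large; and nothing in your hypotheses prevents $\kappa_f$ from being arbitrarily large on $[\delta,x]$. So the circularity ``to bound $S_n$ you already need $S_n=O(1/n)$'' is real, and the heuristic ``$f'(0^+)=+\infty$ outpaces $|f(\tilde S_n)|$'' is not justified in general.

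The paper avoids this head/tail difficulty altogether by a different device: for any $r\in(p,1)\setminus\{0\}$ it builds a \emph{global} comparison function $\varphi$ on $I$ by solving the second-order ODE $\kappa_\varphi=\max(r,\kappa_f)$, normalized so that $\varphi(t)=t^{r}$ on $(0,\delta]$. Then Mikusi\'nski applies on all of $I$, giving $\QA f\le\QA_\varphi$ for every tuple, including those with large head entries. The limit $\frac nx\QA_\varphi(\tfrac x1,\dots,\tfrac xn)$ is computed directly: for large $n$ the average $\frac1n\sum_i\varphi(x/i)$ lies where $\varphi^{-1}(y)=y^{1/r}$, so after pulling $\frac nx$ inside one gets a Riemann sum for $\int_0^1 t^{-r}\,dt$ plus a term of size $n^{r-1}\cdot(\text{fixed head correction})\to0$. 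Letting $r\to p$ yields $C(p)$. In short, the paper replaces your local power-mean comparison by a global quasi-arithmetic comparison that \emph{equals} the power mean near $0$; this absorbs the head terms into an explicit, harmless additive correction and makes the problematic growth-rate balancing unnecessary.
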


Observe that inequality \eq{2} is the strengthening of \eq{Pow_es}.

\begin{proof}
We prove the right hand side inequality of \eq{2} only, the proof of the left hand side inequality is 
completely analogous, therefore, its detailed proof is left to the reader.

If $p\in[1,+\infty]$, then $C(p)=+\infty$, therefore there is nothing to prove. Assume that $p\in[-\infty,1)$.
Consider any real number $r\in(p,1)\setminus\{0\}$. Then there exists $\delta>0$ such that
\Eq{3}{
\kappa_f(x) \le r \qquad \text{for all}\quad x \in (0,\delta).
}
Let $\varphi \colon I \to \R$ be a $C^2$ function with a non-vanishing first derivative such that, for all 
$x\in I$,
\Eq{4}{
\kappa_\varphi(x)=\max \left(r,\kappa_f(x)\right).
}
(This equation for $\varphi$ is a second-order differential equation, which has such solutions, see e.g. 
\cite{Pas13}.)
By \eq{3}, we have:
\Eq{5}{
\kappa_\varphi(x)=r\qquad \text{for all}\quad x \in (0,\delta).
}
Therefore, there exists constants $\alpha,\beta$ such that $\varphi(x)=\alpha x^{r} + \beta$
for all $x \in (0,\delta)$. 

Define $\varphi_0:=\tfrac{1}{\alpha} \varphi -\beta$. Then $\kappa_\varphi=\kappa_{\varphi_0}$, hence 
equality 
\eq{4} holds with $\varphi_0$, too, furthermore $\QA{\varphi}=\QA{\varphi_0}$. 
Therefore, with no loss of generality, we may assume that $\varphi$ and $\varphi_0$ coincide, i.e.
\Eq{phiapprox0}{
\varphi(x)=x^{r},\qquad x \in (0,\delta].
}
As a consequence, for the inverse function of $\varphi$, we have the formula
\Eq{inv}{
  \varphi^{-1}(y)=y^{1/r} \qquad\mbox{for}\quad y\in 
  \begin{cases}
  (0,\delta^r] &\mbox{ if } r>0,\\ [\delta^r,+\infty) &\mbox{ if } r<0.
  \end{cases}
}
Due to \eq{4}, we have $\kappa_f \le \kappa_\varphi$ so, by Mikusi\'nski's theorem, we get $\QA{f} \le 
\QA{\varphi}$.

Fix now $x\in I$. We have
\Eq{*}{
\limsup_{n \to \infty} \frac nx \QA{f}(\tfrac x1, \tfrac x2 ,\dots, \tfrac xn)
\le \limsup_{n \to \infty} \frac nx \QA{\varphi}(\tfrac x1, \tfrac x2 ,\dots, \tfrac xn)
= \limsup_{n \to \infty} \frac nx \varphi^{-1} \bigg(\frac1n\sum_{i=1}^n\varphi\Big(\frac{x}{i}\Big)\bigg).
}
Now our argument splits into two similar cases depending on the sign of $r$.

Consider first the case $r>0$. By the increasingness of $\varphi$, the sequence $\big(\varphi(\frac xn)\big)$ 
converges 
to zero, therefore there exists $n_0 \in \N$ such that, for all $n>n_0$, 
\Eq{*}{
\frac{1}{n}\sum_{i=1}^{n}\varphi\Big(\frac{x}{i}\Big) <\delta^r.
}
It also implies, for all $n>n_0$, that $\tfrac{x}{n}<\delta$, therefore, applying the construction of $n_0$ 
with 
\eq{phiapprox0} and \eq{inv}, we get  
\Eq{*}{
\frac nx \varphi^{-1}\bigg(\frac{1}{n}\sum_{i=1}^{n}\varphi\Big(\frac{x}{i}\Big) \bigg)
&= \frac nx\bigg(\frac{1}{n}\sum_{i=1}^{n}\varphi\Big(\frac{x}{i}\Big) \bigg)^{1/r}
=\bigg(\frac {n^{r-1}}{x^{r}}\sum_{i=1}^{n}\varphi\Big(\frac{x}{i}\Big)\bigg)^{1/r}\\
&=\bigg(\frac {n^{r-1}}{x^{r}}  \sum_{k=1}^{n_0}\left(\varphi\left(\frac xk\right)-\left(\frac 
xk\right)^{r}\right)+\frac {n^{r-1}}{x^{r}}\sum_{k=1}^{n} \left(\frac xk\right)^{r}\bigg)^{1/r}.\\
}
As $r-1<0$ we obtain that the first term tends to $0$ as $n\to\infty$. The second term equals
\Eq{*}{
\frac {n^{r-1}}{x^{r}}\sum_{k=1}^{n} \left(\frac xk\right)^{r}=\frac{1}{n} 
\sum_{k=1}^{n} \left(\frac kn\right)^{-r} \longrightarrow \int_0^1 t^{-r}dt=\frac{1}{1-r}.
}
Thus 
\Eq{*}{
\limsup_{n \to\infty} \frac nx \QA{f}(\tfrac x1, \tfrac x2 ,\dots, \tfrac xn) \le \limsup_{n \to 
\infty} \frac nx \QA{\varphi}(\tfrac x1, \tfrac x2 ,\dots, \tfrac xn)=(1-r)^{-1/r}=C(r).
}
With appropriate changes in the above argument, we can obtain that the same inequality holds also in  
the case $r<0$. Finally, upon passing the limit $r\to p$, we obtain
\Eq{*}{
\limsup_{n \to \infty} \frac nx \QA{f}(\tfrac x1, \tfrac x2 ,\dots, \tfrac xn) \le C(p).
}
Completely analogous considerations lead to the inequality
\Eq{*}{
\liminf_{n \to \infty} \frac nx \QA{f}(\tfrac x1, \tfrac x2 ,\dots, \tfrac xn) \ge C(q).
}
Finally, using this inequality and the second inequality of Theorem~\ref{thm:PalPas16}, it follows that 
\eq{2+} is also valid. Thus the proof is complete.
\end{proof}

Combining Proposition~\ref{Mik} with Theorem~\ref{thm:MainQA} we 
immediately obtain

\begin{thm}
\label{thm:MainQA1}
Let $f \colon \R_+ \to \R$ be a twice continuously differentiable function with nowhere vanishing first 
derivative. If the limit $p:=\lim_{x \to 0^+} \kappa_f(x)$ exists and $\kappa_f(x)\le p$ for every $x>0$, 
then 
$\Hc(\QA{f})=C(p)$. In particular, this mean is Hardy if and only if $p<1$. 
\end{thm}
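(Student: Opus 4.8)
The plan is to squeeze $\Hc(\QA f)$ between $C(p)$ from below and $C(p)$ from above, using the two hypotheses separately: the existence of the limit $p$ will give the lower bound through Theorem~\ref{thm:MainQA}, while the global inequality $\kappa_f\le p$ will give the upper bound through Mikusi\'nski's comparison theorem. Since the paper advertises this as an immediate consequence, the whole argument is essentially bookkeeping, and the only genuine care is needed in the case distinction for the upper bound.

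First I would record the lower bound. Because $p=\lim_{x\to0^+}\kappa_f(x)$ exists, the two quantities in \eq{1} coincide, i.e.\ $q=\limsup_{x\to0^+}\kappa_f(x)=\liminf_{x\to0^+}\kappa_f(x)=p$. Applying Theorem~\ref{thm:MainQA} with $I=\R_+$ then yields, via \eq{2+}, the inequality $C(p)=C(q)\le\Hc(\QA f)$. (As a byproduct, \eq{2} collapses to $\lim_{n\to\infty}\frac nx\QA f(\frac x1,\dots,\frac xn)=C(p)$ for every $x>0$, but this is not needed.)

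Next I turn to the upper bound $\Hc(\QA f)\le C(p)$. If $p\ge1$, including the case $p=+\infty$, then $C(p)=+\infty$ and there is nothing to prove; so assume $p<1$, whence $p$ is finite and $\pi_p$ is a legitimate generator with $\kappa_{\pi_p}\equiv p$. The global hypothesis $\kappa_f(x)\le p=\kappa_{\pi_p}(x)$ for all $x>0$ is, by the reformulation of condition (ii) recorded just after Proposition~\ref{Mik}, equivalent to $\frac{f''}{f'}\le\frac{\pi_p''}{\pi_p'}$ on $\R_+$. Hence Proposition~\ref{Mik} gives the pointwise comparison $\QA f\le\QA{\pi_p}=\P_p$ on every $\R_+^n$. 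Pointwise domination of means is inherited by their Hardy constants: summing $\QA f(x_1,\dots,x_n)\le\P_p(x_1,\dots,x_n)$ over $n$ shows that any admissible constant for $\P_p$ is admissible for $\QA f$, so $\Hc(\QA f)\le\Hc(\P_p)=C(p)$, the last equality being recalled from the introduction.

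Combining the two bounds gives $\Hc(\QA f)=C(p)$. The \emph{in particular} assertion then follows at once from the explicit form of $C$: the mean $\QA f$ is a Hardy mean, i.e.\ $\Hc(\QA f)<\infty$, precisely when $C(p)<\infty$, which holds exactly for $p<1$. The one place deserving attention is the upper bound, where the trivial case $C(p)=+\infty$ must be split off before invoking Mikusi\'nski's theorem, since $\pi_p$ is not available as a generator when $p=+\infty$; after that splitting, every step is a direct application of results already established in the excerpt.
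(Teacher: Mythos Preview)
Your proof is correct and follows essentially the same route as the paper's. The paper is simply terser: for the upper bound it invokes the already-recorded estimate \eq{Pow_es} (which is itself derived from Mikusi\'nski's theorem exactly as you spell out) together with $\sup\kappa_f=p$, and for the lower bound it cites \eq{2+} of Theorem~\ref{thm:MainQA}; your version merely unfolds the former and adds the explicit case split $p\ge1$.
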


\begin{proof}
By % C(p)$$\kappa_f(x) \le p$ and
\eqref{Pow_es} we get $\Hc(\QA{f})\le C(\sup \kappa_f(x))=C(p)$.
The converse inequality is implied by Theorem~\ref{thm:MainQA} -- more precisely by inequality \eqref{2+}.
\end{proof}

The following result strengthens Mulholland's theorem \cite{Mul32}  in the class of concave quasi-arithmetic 
means (compare \eq{Mul} and the implication $\text{(i)} \Rightarrow \text{(ii)}$ below).

\begin{thm}
\label{thm:MainQA2}
Let $f \colon \R_+ \to \R$ be a twice continuously differentiable function with nowhere vanishing first and 
second derivatives such that $f'/f''$ is convex and negative. Then $\kappa_f$ is a decreasing function. 
Furthermore, the following assertions are equivalent:
\begin{enumerate}[(i)]
 \item $\QA{f}$ is a Hardy mean;
 \item There exists a parameter $q<1$ such that $\QA{f}\leq\P_q$;
 \item $p:=\lim_{x \to 0^+} \kappa_f(x)<1$.
\end{enumerate}
And, in each of the above cases, $\QA{f}\leq\P_p$, and $\Hc(\QA{f})=C(p)$.
\end{thm}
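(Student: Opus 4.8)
The plan is to funnel the whole statement through the two results already proved, namely Theorem~\ref{thm:MainQA1} and Mikusi\'nski's Proposition~\ref{Mik}, by first isolating the one piece of genuinely new information contained in the hypothesis: that convexity and negativity of $f'/f''$ force the index $\kappa_f$ to be \emph{decreasing}. Granting this, $p:=\lim_{x\to0^+}\kappa_f(x)$ equals $\sup_{x>0}\kappa_f(x)$, so the bound $\kappa_f(x)\le p$ holds for every $x>0$; then Proposition~\ref{Mik} (comparing $f$ with $\pi_p$, for which $\kappa_{\pi_p}\equiv p$) yields $\QA{f}\le\P_p$, and Theorem~\ref{thm:MainQA1} applies word for word to give $\Hc(\QA{f})=C(p)$. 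After that the three-way equivalence is essentially bookkeeping, since $C(p)$ is finite precisely when $p<1$.

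So the first and central step is to prove that $\kappa_f$ is decreasing. Because $\QA{f}$, the index $\kappa_f$, and the ratio $f'/f''$ are all unchanged when $f$ is replaced by $-f$, I may assume $f'>0$; then negativity of $f'/f''$ forces $f''<0$. Writing $\rho:=f'/f''<0$ I have $\kappa_f=1+x/\rho(x)$, hence
\Eq{*}{
  \kappa_f'(x)=\frac{d}{dx}\frac{x}{\rho(x)}=\frac{\rho(x)-x\rho'(x)}{\rho(x)^2},
}
and it suffices to show that the numerator $h(x):=\rho(x)-x\rho'(x)$ is nonpositive. This is exactly where convexity is used: evaluating the supporting-line inequality of the convex function $\rho$ at the base point $x$ in the limit $t\to0^+$ gives $\rho(0^+)\ge\rho(x)-x\rho'(x)=h(x)$, where the boundary value $\rho(0^+):=\lim_{t\to0^+}\rho(t)$ exists in $[-\infty,+\infty]$; since $\rho$ is everywhere negative, $\rho(0^+)\le0$, so $h\le0$ and $\kappa_f'\le0$. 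I expect this monotonicity to be the only real obstacle, and the delicate point within it is justifying the existence and the sign of the one-sided boundary limit $\rho(0^+)$ of a convex function.

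With $\kappa_f$ decreasing the conclusions assemble quickly. As noted, $p=\sup_{x>0}\kappa_f(x)$, the inequality $\QA{f}\le\P_p$ follows from Proposition~\ref{Mik}, and Theorem~\ref{thm:MainQA1} gives $\Hc(\QA{f})=C(p)$; the extremal case $p=+\infty$ (where the mean fails to be Hardy) is covered consistently, since then $C(p)=+\infty$. For the equivalences, the identity $\Hc(\QA{f})=C(p)$ together with the fact that $C(p)<\infty\iff p<1$ yields (i)$\Leftrightarrow$(iii) at once. For (iii)$\Rightarrow$(ii) one takes $q:=p<1$ and uses $\QA{f}\le\P_p=\P_q$, and for (ii)$\Rightarrow$(i) the domination $\QA{f}\le\P_q$ with $q<1$ gives $\Hc(\QA{f})\le\Hc(\P_q)=C(q)<\infty$ by the evident monotonicity of the Hardy constant under pointwise comparison of means. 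This closes the cycle and simultaneously records $\QA{f}\le\P_p$ and $\Hc(\QA{f})=C(p)$ in every case.
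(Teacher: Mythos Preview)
Your proof is correct and follows the paper's overall strategy: establish that $\kappa_f$ is decreasing, then feed the resulting inequality $\kappa_f\le p$ into Theorem~\ref{thm:MainQA1}. Two small differences are worth recording.

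First, your monotonicity argument computes $\kappa_f'$ in terms of $\rho'$, but under the stated hypothesis $f$ is only $C^2$, so $\rho=f'/f''$ need not be differentiable. This is harmless: $\rho$ is convex, hence has one-sided derivatives everywhere, and your supporting-line inequality $\rho(0^+)\ge\rho(x)-x\rho'_+(x)$ goes through with the right derivative; alternatively, one argues directly (as the paper does, though without details) that for a convex negative function on $\R_+$ the map $x\mapsto\rho(x)/x$ is increasing, since $\rho$ extends continuously to $[0,\infty)$ with $\rho(0^+)\le0$ and then $\rho(tx)\le t\rho(x)+(1-t)\rho(0^+)\le t\rho(x)$ for $t\in(0,1)$.

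Second, for the equivalences you use $\Hc(\QA{f})=C(p)$ from Theorem~\ref{thm:MainQA1} once and read everything off the identity. The paper instead proves (iii)$\Rightarrow$(i) by invoking Theorem~\ref{concQA} (to get concavity of $\QA{f}$) together with Theorems~\ref{thm:mainPaPa} and~\ref{thm:MainQA}. Your route is shorter and shows that the concavity of $\QA{f}$ is in fact not needed here at all: Theorem~\ref{thm:MainQA1} already contains the full strength required, and the hypotheses on $f'/f''$ enter only through the monotonicity of $\kappa_f$.
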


\begin{proof} 
By the convexity and negativity of the function $f'/f''$, the mapping $x\mapsto f'(x)/(x \cdot f''(x))$ is 
negative and increasing. Therefore, $\kappa_f$ is decreasing, whence the right limit $p$ of $\kappa_f$ at 
zero 
exists and $\kappa_f\leq p$. Then, by Proposition~\ref{Mik}, it follows that $\QA{f} \le \P_p$.

To prove $(i) \Rightarrow (ii)$, assume that $\QA{f}$ is a Hardy mean. 
Then Theorem~\ref{thm:MainQA1} implies that $p<1$, hence $(ii)$ holds with $q=p$.
The implication $(ii) \Rightarrow (iii)$ is obvious. 

Finally, we prove $(iii) \Rightarrow (i)$. In view of Theorem~\ref{concQA}, the convexity and negativity of 
$f'/f''$  implies that $\QA{f}$ is a concave quasi-arithmetic mean. Therefore, by Theorems~\ref{thm:mainPaPa} 
and \ref{thm:MainQA},
\Eq{*}{
\Hc(\QA{f})
=\sup_{y>0} \liminf_{n \to \infty} \frac ny \cdot \QA{f}\left(\frac y1,\frac y2,\ldots,\frac yn\right)
\leq \sup_{y>0} \limsup_{n \to \infty} \frac ny \cdot \QA{f}\left(\frac y1,\frac y2,\ldots,\frac yn\right)
\leq C(p),
}
where $p<1$, hence $\QA{f}$ is a Hardy mean.

To complete the proof, observe that, by the convexity and negativity of $f'/f''$, the mean $\QA{f}$ is 
concave, therefore Theorem~\ref{thm:MainQA1} implies the equality $\Hc(\QA{f})=C(p)$.
\end{proof}

\subsection{Homogeneous deviation means}

\begin{thm}\label{CDM+}
Let $f\colon\R_+\to\R$ be a strictly increasing concave function with $f(1)=0$. Then the homogeneous deviation 
mean $\E_f$ is a Hardy mean if and only if 
\Eq{HC}{
  \int_0^1f\Big(\frac{1}{t}\Big)dt<+\infty
}
and, if the above inequality holds, then its Hardy constant is the unique positive solution $c$ of the equation
\Eq{*}{
  \int_0^cf\Big(\frac{1}{t}\Big)dt=0.
}
\end{thm}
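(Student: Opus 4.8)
The plan is to apply Theorem~\ref{thm:mainPaPa} to $\E_f$ and then to evaluate the resulting limit explicitly. By Theorem~\ref{CDM}, the mean $\E_f$ is homogeneous, continuous, increasing and concave, and, being a deviation mean, it is also symmetric and repetition invariant. Hence all hypotheses of Theorem~\ref{thm:mainPaPa} (the homogeneous case) are fulfilled, and
\[
  \Hc(\E_f)=\lim_{n\to\infty} n\cdot\E_f\Big(1,\tfrac12,\dots,\tfrac1n\Big),
\]
the limit existing in $[0,+\infty]$. Writing $y_n:=\E_f(1,\tfrac12,\dots,\tfrac1n)$ and $c_n:=n y_n$, the defining equation of the deviation mean (with $E(x,y)=f(x/y)$) reads $\sum_{k=1}^n f\big(\tfrac{1}{k y_n}\big)=0$, i.e.
\[
  S_n(c_n)=0,\qquad\text{where}\qquad S_n(c):=\frac1n\sum_{k=1}^n f\Big(\frac{1}{(k/n)\,c}\Big).
\]
Thus everything reduces to identifying $\lim_{n\to\infty} c_n$.

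The quantity $S_n(c)$ is a right-endpoint Riemann sum of the function $t\mapsto f\big(\tfrac{1}{tc}\big)$ on $(0,1]$, which is decreasing in $t$. I would first record the auxiliary functions
\[
  \psi(c):=\int_0^1 f\Big(\frac{1}{tc}\Big)\,dt=\frac1c\int_0^c f\Big(\frac1s\Big)\,ds=\frac1c\,\Phi(c),
  \qquad \Phi(c):=\int_0^c f\Big(\frac1s\Big)\,ds,
\]
the middle identity coming from the substitution $s=tc$. Since $f$ is strictly increasing with $f(1)=0$, the integrand $f(1/s)$ is positive on $(0,1)$, vanishes at $1$ and is negative on $(1,\infty)$; moreover $f(0^+)<0$ forces $\int_1^\infty f(1/s)\,ds=-\infty$. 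Consequently $\Phi$ increases on $(0,1)$ from $\Phi(0)=0$ to $\Phi(1)>0$ and then decreases to $-\infty$, so $\Phi$ (equivalently $\psi$) has a unique positive zero $c$ precisely when $\Phi(1)=\int_0^1 f(1/t)\,dt$ is finite, that is, under condition \eqref{HC}; this $c$ is the candidate Hardy constant.

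The heart of the matter is the convergence $c_n\to c$ under \eqref{HC}. Here I would prove two facts. First, $S_n(c)\to\psi(c)$ for each fixed $c>0$: for a monotone integrand with convergent (possibly improper) integral, the right-endpoint sums are squeezed by $\int_{1/n}^1 f\big(\tfrac{1}{tc}\big)\,dt+\tfrac1n f\big(\tfrac1c\big)$ from below and by $\int_0^1 f\big(\tfrac1{tc}\big)\,dt$ from above, and both bounds tend to $\psi(c)$. Second, both $S_n$ and $\psi$ are continuous and strictly decreasing in $c$ (each summand, and the integrand, decrease as $c$ grows). Fixing $\varepsilon>0$, continuity and strict monotonicity of $\psi$ give $\psi(c-\varepsilon)>0>\psi(c+\varepsilon)$; by the pointwise convergence we then have $S_n(c-\varepsilon)>0>S_n(c+\varepsilon)$ for all large $n$, and since $S_n$ is decreasing with $S_n(c_n)=0$ this pins $c_n$ into $(c-\varepsilon,c+\varepsilon)$. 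Hence $c_n\to c$, and $\Hc(\E_f)=c$ solves $\int_0^c f(1/t)\,dt=0$. I expect this Riemann-sum-to-improper-integral step, together with the control needed to transfer the zeros, to be the main technical obstacle, since the integrand blows up at $t=0$ when $f(+\infty)=+\infty$.

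Finally, for the converse direction, suppose \eqref{HC} fails, so $\int_0^1 f(1/t)\,dt=+\infty$ and hence $\Phi(c)=+\infty$ for every $c>0$. Fixing any $C>0$, the same lower Riemann bound gives $S_n(C)\ge\int_{1/n}^1 f\big(\tfrac{1}{tC}\big)\,dt+\tfrac1n f\big(\tfrac1C\big)\to+\infty$, so $S_n(C)>0$ for large $n$; as $S_n$ is decreasing with $S_n(c_n)=0$, this yields $c_n>C$. Since $C$ was arbitrary, $c_n\to+\infty$, whence $\Hc(\E_f)=+\infty$ and $\E_f$ is not a Hardy mean. This completes the equivalence and the formula for the Hardy constant.
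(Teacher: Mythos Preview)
Your proof is correct and follows essentially the same route as the paper: invoke Theorem~\ref{CDM} and Theorem~\ref{thm:mainPaPa} to reduce to computing $\lim_n n\,\E_f(1,\tfrac12,\dots,\tfrac1n)$, then compare the defining sum $S_n$ with the integral $\Phi(c)=\int_0^c f(1/t)\,dt$ via monotone Riemann-sum bounds. The only notable differences are organizational: you pass directly from pointwise convergence $S_n(c)\to\psi(c)$ plus strict monotonicity to convergence of the roots $c_n\to c$, whereas the paper argues by squeezing with an arbitrary $K>\Hc(\E_f)$ (and then $K<\Hc(\E_f)$); and to show $\Phi(x)\to-\infty$ you use the elementary observation $f(0^+)<0$, while the paper instead invokes the concave upper bound $f(x)\le a(x-1)$.
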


\begin{proof}
By Theorem \ref{CDM}, the deviation mean $\E_f$ is homogeneous, continuous, increasing and concave.
Therefore, applying the second assertion of Theorem \ref{thm:mainPaPa}, we have that
\Eq{*}{
  \Hc(\E_f)=\lim_{n\to\infty} \E_f\Big(\frac{n}1,\frac{n}2,\dots,\frac{n}n\Big).
}
Assume first that $\E_f$ possesses the Hardy property. Then $\Hc(\E_f)$ is finite, and hence the sequence
$\big(\E_f\big(\tfrac{n}1,\tfrac{n}2,\dots,\tfrac{n}n\big)\big)$ is bounded. Thus, there exists $K>0$ such that,
for all $n\in\N$,
\Eq{Efn}{
  \E_f\Big(\frac{n}1,\frac{n}2,\dots,\frac{n}n\Big)\leq K,
}
which is equivalent to 
\Eq{*}{
  f\Big(\frac{n}{K}\Big)+f\Big(\frac{n}{2K}\Big)+\cdots+f\Big(\frac{n}{nK}\Big)\leq0
}
For the sake of brevity define the function $f^*\colon\R_+\to\R$ by $f^*(t):=f\big(\frac{1}{t}\big)$. Then $f^*$ is a 
continuous strictly decreasing function with $f^*(1)=0$. Hence, the above inequality is equivalent to 
\Eq{im}{
  \frac{K}{n}\bigg(f^*\Big(\frac{K}{n}\Big)+f^*\Big(\frac{2K}{n}\Big)+\cdots+f^*\Big(\frac{nK}{n}\Big)\bigg)\leq0.
}
By the decreasingness of $f^*$, for all $i\in\{1,2,\dots,n\}$, we have that
\Eq{*}{
  \int_{\frac{iK}{n}}^{\frac{(i+1)K}{n}}f^*(t)dt\leq \frac{K}{n}f^*\Big(\frac{iK}{n}\Big).
}
After adding up these inequalities side by side and using \eq{im}, we get
\Eq{Kn}{
  \int_{\frac{K}{n}}^{\frac{(n+1)K}{n}}f^*(t)dt\leq0.
}
Hence, for all $n\geq K$,
\Eq{*}{
  \int_{\frac{K}{n}}^1f^*(t)dt\leq -\int_1^{\frac{(n+1)K}{n}}f^*(t)dt.
}
Upon taking the limit $n\to\infty$, we arrive at the inequality
\Eq{*}{
  (0\leq) \int_{0}^1f^*(t)dt\leq -\int_1^Kf^*(t)dt,
}
which proves that condition \eq{HC} must be valid.

Now assume that \eq{HC} holds. Define the function $F\colon\R_+\to\R$ by
\Eq{*}{
  F(x):=\int_0^xf\Big(\frac{1}{t}\Big)dt=\int_0^xf^*(t)dt.
}
Obviously, $F$ is continuous, strictly increasing on $(0,1)$, and strictly decreasing on $(1,\infty)$. To show that it 
has a zero on $(1,\infty)$, it suffices to prove that $F(x)$ tends to $-\infty$ as $x\to+\infty$.

By the concavity of $f$, there exists a positive constant $a\in\R$ such that, for all $x\in\R_+$,
\Eq{*}{
  f(x)=f(x)-f(1)\leq a(x-1).
}
Therefore, for $x\geq 1$, we get
\Eq{*}{
  F(x)=\int_0^xf\Big(\frac{1}{t}\Big)dt
      \leq \int_0^1f\Big(\frac{1}{t}\Big)dt + a\int_1^x\Big(\frac{1}{t}-1\Big)dt
      \leq \int_0^1f\Big(\frac{1}{t}\Big)dt + a(\ln(x)-x+1).
}
The right hand side estimate tends to $-\infty$ as $x\to+\infty$, therefore $F$ also has this property.
This ensures that $F$ has a unique zero, denoted by $c$, in the interval $(1,\infty)$. 

In the rest of the proof, we show that $c=\Hc(\E_f)$. Let $K>\Hc(\E_f)$ be an arbitrary number. Then there exists $n_0$ 
such that, for all $n\geq n_0$, we have the inequality \eq{Efn}. Repeating the same argument as above, this inequality 
implies \eq{Kn} for all $n\geq n_0$. Upon taking the limit $n\to\infty$, we obtain that 
\Eq{*}{
  F(K)=\int_0^K f^*(t)dt\leq0.
}
Consequently, $c\leq K$. Taking now the limit $K\to\Hc(\E_f)$, we get that $c\leq\Hc(\E_f)$.

The proof of the reversed inequality $c\geq\Hc(\E_f)$ is completely analogous, therefore the equality $c=\Hc(\E_f)$ 
holds as desired.
\end{proof}

In order to formulate a corollary of this theorem for Gini means (cf.\ \cite{Gin38}), we introduce the 
following notation: Given two real numbers $p,q\in\R$, define the function $\chi_{p,q}\colon\R_+\to\R$ by
\Eq{*}{
  \chi_{p,q}(x)
  :=\begin{cases}
    \dfrac{x^p-x^q}{p-q} & \mbox{ if } p\neq q, \\[4mm]
    x^p\ln(x) & \mbox{ if } p=q.
    \end{cases}
}
In this case, the function $E_{p,q}\colon \R_+^2\to\R$ defined by 
\Eq{*}{
  E_{p,q}(x,y):=y^p\chi_{p,q}\Big(\frac{x}{y}\Big)
}
is a deviation function on $\R_+$. The deviation mean generated by $E_{p,q}$ will be denoted by 
$\G_{p,q}$ and called the Gini mean of parameter $p,q$ (cf.\ \cite{Gin38}). One can easily see that 
$\G_{p,q}$ has the following explicit form:
\Eq{GM}{
  \G_{p,q}(x_1,\dots,x_n)
   :=\left\{\begin{array}{ll}
    \left(\dfrac{x_1^p+\cdots+x_n^p}
           {x_1^q+\cdots+x_n^q}\right)^{\frac{1}{p-q}} 
      &\mbox{if }p\neq q, \\[4mm]
     \exp\left(\dfrac{x_1^p\ln(x_1)+\cdots+x_n^p\ln(x_n)}
           {x_1^p+\cdots+x_n^p}\right) \quad
      &\mbox{if }p=q.
    \end{array}\right.
}
Clearly, in the particular case $q=0$, the mean $\G_{p,q}$ reduces to the $p$th H\"older mean $\P_p$. It is 
also obvious that $\G_{p,q}=\G_{q,p}$.

\begin{cor} 
Gini means $\G_{p,q}$ is increasing and concave if and only if 
\Eq{pq}{
\min(p,q)\leq0\leq\max(p,q)\leq 1.
}
In this case, $\G_{p,q}$ is a Hardy mean if and only if $\max(p,q)<1$ and then
\Eq{*}{
\Hc(\G_{p,q})=\begin{cases} 
               \left(\dfrac{1-p}{1-q}\right)^{\frac1{q-p}} & p \ne q, \\
               e & p=q=0.
              \end{cases}
}
\end{cor}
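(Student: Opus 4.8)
The plan is to reduce everything to the homogeneous deviation framework of Theorems~\ref{CDM} and~\ref{CDM+}. Since $\G_{p,q}$ is the homogeneous deviation mean generated by $E_{p,q}(x,y)=y^p\chi_{p,q}(x/y)$, it coincides with $\E_f$ for $f:=\chi_{p,q}$, and $f(1)=\chi_{p,q}(1)=0$. By the symmetry $\G_{p,q}=\G_{q,p}$ I may assume $p\ge q$ and write $a:=\max(p,q)=p$ and $b:=\min(p,q)=q$. The increasingness of $\G_{p,q}$ is, by the quoted characterization of increasingness of deviation means, equivalent to the increasingness of $E_{p,q}$ in its first variable, i.e.\ to the increasingness of $\chi_{p,q}$; and by the remark following Theorem~\ref{CDM}, the concavity of $\G_{p,q}$ is equivalent to the concavity of $\chi_{p,q}$. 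Hence the first assertion reduces to showing that $\chi_{p,q}$ is (strictly) increasing and concave on $\R_+$ if and only if $b\le 0\le a\le 1$.

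Second, I carry out the sign analysis. For $p\ne q$, differentiation gives $\chi_{p,q}'(t)=\frac{a t^{a-1}-b t^{b-1}}{a-b}$ and $\chi_{p,q}''(t)=\frac{a(a-1)t^{a-2}-b(b-1)t^{b-2}}{a-b}$. Dividing by the positive quantities $t^{b-1}$ and $t^{b-2}$ respectively and using $a-b>0$, the conditions $\chi_{p,q}'>0$ and $\chi_{p,q}''\le 0$ on $\R_+$ become, for all $t>0$,
\[
 a\,t^{a-b}>b \qquad\text{and}\qquad a(a-1)\,t^{a-b}\le b(b-1).
\]
Letting $t\to+\infty$ and $t\to 0^+$ in the first inequality (recall $a-b>0$) forces $a\ge 0$ and $b\le 0$, respectively, and these are clearly also sufficient for it. Under $b\le 0\le a$ one has $b(b-1)\ge 0$, so the second inequality holds for every $t$ exactly when $a(a-1)\le 0$, i.e.\ $a\le 1$, since $a>1$ makes the left side unbounded as $t\to+\infty$. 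Thus $\chi_{p,q}$ is increasing and concave iff $b\le 0\le a\le 1$. The diagonal case $p=q$ is handled separately: $\chi_{p,p}(t)=t^p\ln t$ has derivative $t^{p-1}(p\ln t+1)$, which fails to keep constant sign unless $p=0$, so only $p=q=0$ survives, consistently with \eqref{pq}.

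Third, assuming \eqref{pq}, I apply Theorem~\ref{CDM+} to $f=\chi_{p,q}$, which is strictly increasing, concave and vanishes at $1$. The Hardy property is then equivalent to $\int_0^1\chi_{p,q}(1/t)\,dt<+\infty$. For $p\ne q$ one has $\chi_{p,q}(1/t)=\frac{t^{-p}-t^{-q}}{p-q}$, whose integrability near $0$ holds iff both exponents exceed $-1$, i.e.\ iff $\max(p,q)<1$ (when $\max(p,q)=1$ the surviving $t^{-1}$ term diverges, with no cancellation); the case $p=q=0$ yields the integrable $-\ln t$, with $\max(p,q)=0<1$. Finally, the Hardy constant is the unique positive solution $c$ of $\int_0^c\chi_{p,q}(1/t)\,dt=0$. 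For $p\ne q$, evaluating the elementary integral and cancelling the nonzero factor $\tfrac1{p-q}$ yields $\frac{c^{1-p}}{1-p}=\frac{c^{1-q}}{1-q}$, whence $c^{\,q-p}=\frac{1-p}{1-q}$ and $c=\big(\frac{1-p}{1-q}\big)^{1/(q-p)}$; for $p=q=0$ the equation $\int_0^c(-\ln t)\,dt=c(1-\ln c)=0$ gives $c=e$. This matches the asserted formula.

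The main obstacle will be the sign analysis of the second paragraph: treating all parameter regimes uniformly in $t$ and correctly extracting the boundary constraints $a\ge 0$, $b\le 0$ and $a\le 1$ from the two families of inequalities, together with the separate handling of the diagonal $p=q$. By contrast, the integrations in the Hardy step are routine once Theorem~\ref{CDM+} is in force.
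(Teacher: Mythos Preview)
Your proof is correct, and for the Hardy step it is essentially identical to the paper's: both apply Theorem~\ref{CDM+} to $f=\chi_{p,q}$, check the integrability of $\chi_{p,q}(1/t)$ near $0$, and solve the resulting elementary equation for $c$.

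The genuine difference is in the characterization \eqref{pq}. The paper simply cites Losonczi's results \cite{Los71a,Los71b} for the concavity of $\G_{p,q}$ and separately notes that increasingness amounts to $0$ lying between $p$ and $q$. You instead reduce the question to the behaviour of the single-variable function $\chi_{p,q}$: increasingness of the deviation mean is equivalent to increasingness of $\chi_{p,q}$, and concavity of the mean is equivalent to concavity of $\chi_{p,q}$, after which a clean sign analysis of $\chi_{p,q}'$ and $\chi_{p,q}''$ yields the three constraints $b\le 0$, $a\ge 0$, $a\le 1$, with the diagonal case handled by the sign change of $t^{p-1}(p\ln t+1)$. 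Your route is more elementary and self-contained, but note that the implication ``$\E_f$ concave $\Rightarrow$ $f$ concave'' that you invoke is only stated (not proved) in the remark after Theorem~\ref{CDM}; the paper sidesteps this by relying on Losonczi's external characterization. Both approaches are valid; yours keeps the argument inside the paper's framework at the cost of leaning on that unproved remark, while the paper's approach defers the work to the literature.
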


\begin{proof}
By the results of Losonczi \cite{Los71a}, \cite{Los71b}, $\G_{p,q}$ is concave if and only if \eq{pq} holds.
On the other hand, $\G_{p,q}$ is increasing if and only if the first two inequalities in \eq{pq} are 
satisfied, that is, if $0$ is between $p$ and $q$.

Now, for $p \ne q$, in view of Theorem~\ref{CDM+}, with $f=\chi_{p,q}$, we get that $\G_{p,q}$ is a Hardy 
mean if and only if 
\Eq{*}{
  \int_0^1 \chi_{p,q}\Big(\frac1t\Big) dt<+\infty.
}
For $p\neq q$, we have
\Eq{*}{
\int_0^1 \chi_{p,q}\Big(\frac1t\Big)\, dt
=\frac1{p-q} \int_0^1 (t^{-p}-t^{-q})\, dt, 
}
which is finite if and only if $\max(p,q)<1$. In that case, the Hardy constant $c$ of the mean $\G_{p,q}$
satisfies
\Eq{*}{
  0=\int_0^c \chi_{p,q}\Big(\frac1t\Big)\, dt
   =\frac1{p-q} \int_0^c (t^{-p}-t^{-q})\, dt
   =\frac{1}{p-q} \left( \frac{1}{1-p} c^{1-p} - \frac{1}{1-q} c^{1-q} \right).
}
Solving this equation with respect to $c$, we obtain that
\Eq{*}{
  c = \left( \frac{1-p}{1-q} \right)^{\frac1{q-p}}.
}
In the case $p=q=0$, we get 
\Eq{*}{
  \int_0^1 \chi_{0,0}\Big(\frac1t\Big)\, dt=-\int_0^1 \ln t \, dt=1<+\infty,
}
proving that $\G_{0,0}$ is a Hardy mean. For its Hardy constant $c$ it remains to find the positive solution 
of the equation 
\Eq{*}{
  0=\int_0^c \chi_{0,0}\Big(\frac1t\Big)\, dt=-\int_0^c \ln t \, dt=c(1-\ln c),
}
which results the solution $c=e$.
\end{proof}

%\bibliography{publ,funcequ,newbib}

\begin{thebibliography}{10}

\bibitem{Acz48a}
J.~Aczél.
\newblock {On mean values}.
\newblock {\em Bull. Amer. Math. Soc.}, 54:392–400, 1948.

\bibitem{Arr65}
K.~J. Arrow.
\newblock {\em {Aspects of the Theory of Risk-Bearing}}.
\newblock Yrj{\"o} Jahnsson Foundation, Helsinki, 1965.

\bibitem{BerDoe15}
F.~Bernstein and G.~Doetsch.
\newblock {Zur {T}heorie der konvexen {F}unktionen}.
\newblock {\em Math. Ann.}, 76(4):514–526, 1915.

\bibitem{Car32}
T.~Carleman.
\newblock {Sur les fonctions quasi-analitiques}.
\newblock {\em Conférences faites au cinquième congrès des mathématiciens
  scandinaves, Helsinki}, page 181–196, 1932.

\bibitem{CGJJ17}
J. Chudziak, D. G{\l}azowska, J. Jarczyk, and W. Jarczyk.
\newblock On weighted quasi-arithmetic means which are convex.
\newblock manuscript.

\bibitem{Dar71b}
Z.~Daróczy.
\newblock {A general inequality for means}.
\newblock {\em Aequationes Math.}, 7(1):16–21, 1971.

\bibitem{Dar72b}
Z.~Daróczy.
\newblock {Über eine {K}lasse von {M}ittelwerten}.
\newblock {\em Publ. Math. Debrecen}, 19:211–217 (1973), 1972.

\bibitem{DarLos70}
Z.~Daróczy and L.~Losonczi.
\newblock {Über den {V}ergleich von {M}ittelwerten}.
\newblock {\em Publ. Math. Debrecen}, 17:289–297 (1971), 1970.

\bibitem{DarPal82}
Z.~Daróczy and Zs. Páles.
\newblock {On comparison of mean values}.
\newblock {\em Publ. Math. Debrecen}, 29(1-2):107–115, 1982.

\bibitem{DarPal83}
Z.~Daróczy and Zs. Páles.
\newblock {Multiplicative mean values and entropies}.
\newblock In {\em {Functions, series, operators, Vol. I, II (Budapest, 1980)}},
  page 343–359. North-Holland, Amsterdam, 1983.

\bibitem{Def31}
B.~de~Finetti.
\newblock {{S}ul concetto di media}.
\newblock {\em Giornale dell' Instituto, Italiano degli Attuarii}, 2:369–396,
  1931.

\bibitem{DucMcG03}
J.~Duncan and C.~M. McGregor.
\newblock Carleman's inequality.
\newblock {\em Amer. Math. Monthly}, 110(5):424--431, 2003.

\bibitem{Gin38}
C.~Gini.
\newblock {{D}i una formula compressiva delle medie}.
\newblock {\em Metron}, 13:3–22, 1938.

\bibitem{Har20a}
G.~H. Hardy.
\newblock {Note on a theorem of Hilbert.}
\newblock {\em {Math. Z.}}, 6:314–317, 1920.

\bibitem{Kno28}
K.~Knopp.
\newblock {Über {R}eihen mit positiven {G}liedern}.
\newblock {\em J. London Math. Soc.}, 3:205–211, 1928.

\bibitem{Kol30}
A.~N. Kolmogorov.
\newblock {{S}ur la notion de la moyenne}.
\newblock {\em Rend. Accad. dei Lincei (6)}, 12:388–391, 1930.

\bibitem{KufMalPer07}
A.~Kufner, L.~Maligranda, and L.E. Persson.
\newblock {\em {The Hardy Inequality: About Its History and Some Related
  Results}}.
\newblock Vydavatelsk{\`y} servis, 2007.

\bibitem{Lan21}
E.~Landau.
\newblock A note on a theorem concerning series of positive terms.
\newblock {\em J. London Math. Soc.}, 1:38–39, 1921.

\bibitem{Los70a}
L.~Losonczi.
\newblock {Über den {V}ergleich von {M}ittelwerten die mit
  {G}ewichtsfunktionen gebildet sind}.
\newblock {\em Publ. Math. Debrecen}, 17:203–208 (1971), 1970.

\bibitem{Los71a}
L.~Losonczi.
\newblock {Subadditive {M}ittelwerte}.
\newblock {\em Arch. Math. (Basel)}, 22:168–174, 1971.

\bibitem{Los71c}
L.~Losonczi.
\newblock {Subhomogene {M}ittelwerte}.
\newblock {\em Acta Math. Acad. Sci. Hungar.}, 22:187–195, 1971.

\bibitem{Los71b}
L.~Losonczi.
\newblock {Über eine neue {K}lasse von {M}ittelwerten}.
\newblock {\em Acta Sci. Math. (Szeged)}, 32:71–81, 1971.

\bibitem{Los73a}
L.~Losonczi.
\newblock {General inequalities for nonsymmetric means}.
\newblock {\em Aequationes Math.}, 9:221–235, 1973.

\bibitem{Los77}
L.~Losonczi.
\newblock {Inequalities for integral mean values}.
\newblock {\em J. Math. Anal. Appl.}, 61(3):586–606, 1977.

\bibitem{Mik48}
J.~G. Mikusiński.
\newblock Sur les moyennes de la forme $\psi^{-1}[\sum q\psi(x)]$.
\newblock {\em Studia Mathematica}, 10(1):90--96, 1948.

\bibitem{Mul32}
P.~Mulholland.
\newblock {On the generalization of {H}ardy's inequality}.
\newblock {\em J. London Math. Soc.}, 7:208–214, 1932.

\bibitem{Nag30}
M.~Nagumo.
\newblock {Über eine {K}lasse der {M}ittelwerte}.
\newblock {\em Japanese J. Math.}, 7:71–79, 1930.

\bibitem{Pal82a}
Zs. Páles.
\newblock {Characterization of quasideviation means}.
\newblock {\em Acta Math. Acad. Sci. Hungar.}, 40(3-4):243–260, 1982.

\bibitem{Pal83b}
Zs. Páles.
\newblock {On complementary inequalities}.
\newblock {\em Publ. Math. Debrecen}, 30(1-2):75–88, 1983.

\bibitem{Pal84a}
Zs. Páles.
\newblock {Inequalities for comparison of means}.
\newblock In W.~Walter, editor, {\em {General Inequalities, 4 (Oberwolfach,
  1983)}}, volume~71 of {\em {International Series of Numerical Mathematics}},
  page 59–73. Birkhäuser, Basel, 1984.

\bibitem{Pal85a}
Zs. Páles.
\newblock {Ingham {J}essen's inequality for deviation means}.
\newblock {\em Acta Sci. Math. (Szeged)}, 49(1-4):131–142, 1985.

\bibitem{Pal88a}
Zs. Páles.
\newblock {General inequalities for quasideviation means}.
\newblock {\em Aequationes Math.}, 36(1):32–56, 1988.

\bibitem{Pal88d}
Zs. Páles.
\newblock {On a {P}exider-type functional equation for quasideviation means}.
\newblock {\em Acta Math. Hungar.}, 51(1-2):205–224, 1988.

\bibitem{Pal88e}
Zs. Páles.
\newblock {On homogeneous quasideviation means}.
\newblock {\em Aequationes Math.}, 36(2-3):132–152, 1988.

\bibitem{PalPas16}
Zs. Páles and P.~Pasteczka.
\newblock {Characterization of the {H}ardy property of means and the best
  {H}ardy constants}.
\newblock {\em Math. Inequal. Appl.}, 19(4):1141–1158, 2016.

\bibitem{Pas13}
P.~Pasteczka.
\newblock {When is a family of generalized means a scale?}
\newblock {\em Real Anal. Exchange}, 38(1):193–209, 2012/13.

\bibitem{PecSto01}
J.~E. Pečarić and K.~B. Stolarsky.
\newblock {Carleman's inequality: history and new generalizations}.
\newblock {\em Aequationes Math.}, 61(1–2):49–62, 2001.

\bibitem{Pra64}
J.~W. Pratt.
\newblock Risk aversion in the small and in the large.
\newblock {\em Econometrica}, 32(1/2):122--136, 1964.

\end{thebibliography}
%\bibliographystyle{plain}

\end{document}